\newcommand{\carka}{,\penalty0\relax}
\newcommand{\zlom}{\penalty0\relax}
\newtheorem{theorem}{Theorem}
\newtheorem{lemma}[theorem]{Lemma}
\newtheorem{proposition}[theorem]{Proposition}
\newtheorem{corollary}[theorem]{Corollary}
\def\email#1{\href{mailto:#1}{\texttt{#1}}}
\def\NP{\ensuremath{\mathcal{NP}}}
\def\Z{\ensuremath{\mathbb{Z}}}
\def\N{\ensuremath{\mathbb{N}}}
\title{The Packing Coloring of Distance Graphs $D(k,t)$}
\author{Jan Ekstein$^{*}$ ~~~ P\v{r}emysl Holub\thanks{University of West Bohemia, Pilsen, Czech Republic,\newline e-mail:          
        \email{$\lbrace$ekstein, holubpre$\rbrace$@kma.zcu.cz}.}\and
        Olivier Togni\thanks{University of Bourgogne, Dijon, France,\newline e-mail: \email{Olivier.Togni@u-bourgogne.fr}.}}
\date{\today}
\begin{document}

 \maketitle

 \bigskip

 \begin{abstract}
  The packing chromatic number $\chi_{\rho}(G)$ of a graph $G$ is the smallest integer $p$ such that vertices of $G$ can be 
  partitioned into disjoint classes $X_{1}, ..., X_{p}$ where vertices in $X_{i}$ have pairwise distance greater than $i$. For 
  $k < t$ we study the packing chromatic number of infinite distance graphs $D(k, t)$, i.e. graphs with the set $\Z$ of integers as 
  vertex set and in which two distinct vertices $i, j \in \Z$ are adjacent if and only if $|i - j| \in \{k, t\}$.
  
  We generalize results by Ekstein et al. for graphs $D (1, t)$. For sufficiently large $t$ we prove that $\chi_{\rho}(D(k, t)) \leq 30$
  for both $k$, $t$ odd, and that $\chi_{\rho}(D(k, t)) \leq 56$ for exactly one of $k$, $t$ odd. We also give some upper and lower 
  bounds for $\chi_{\rho}(D(k, t))$ with small $k$ and $t$.
  \\
  {\bf Keywords:} distance graph; packing coloring; packing chromatic \zlom number
   \\
  {\bf AMS Subject Classification (2010):} 05C12, 05C15
 \end{abstract}

\section{Introduction}
 The concept of a packing coloring was introduced by Goddard et al. \cite{God} under the name broadcast coloring where an application to 
 frequency assignments was indicated. In a given network the signals of two stations that are using the same broadcast frequency will 
 interfere unless they are located sufficiently far apart. The distance, in which the signals will propagate, is directly related to the
 power of those signals. Bre\v sar et al. in \cite{Bresar} mentioned that this concept could have several additional applications, as, 
 for instance, in resource placements and biological diversity (different species in a certain area require different amounts of
 territory). Moreover, the concept is both a packing and a coloring (i.e., a partitioning) concept. Therefore Bre\v sar et al. in 
 \cite{Bresar} proposed a notion {\em packing coloring} which we follow in this paper.      
 
 In this paper we consider simple undirected graphs only. For terminology and notations not defined here we refer to
 \cite{Bon}. Let $G$ be a connected graph and let $\mbox{dist}_{G}(u, v)$ denote the distance between vertices $u$ and
 $v$ in $G$. We ask for a partition of the vertex set of $G$ into disjoint classes $X_{1}, ..., X_{p}$ according to the
 following constraints. Each color class $X_{i}$ should be an \emph{$i$-packing}, a set of vertices with property that
 any distinct pair $u, v \in X_{i}$ satisfies $\mbox{dist}_{G}(u, v) > i$. Such a partition is called a
 \emph{packing $p$-coloring}, even though it is allowed that some sets $X_{i}$ may be empty. The smallest integer $p$
 for which there exists a packing $p$-coloring of $G$ is called the \emph{packing chromatic number} of $G$ and it is
 denoted $\chi_{\rho}(G)$. The determination of the packing chromatic number is computationally difficult. It was shown to 
 be \NP-complete for general graphs in~\cite{God}. Fiala and Golovach \cite{FiaGol} showed that the problem remains \NP-complete 
 even for trees.

 Let $D = \{d_{1}, d_{2}, ..., d_{k}\}$, where $d_{i}$ ($i = 1, 2, ..., k$) are positive integers such that
 $d_{1} < d_{2} < ... < d_{k}$. The (infinite) \emph{distance graph} $D(d_{1}, d_{2}, ..., d_{k})$ has the set $\mathbb{Z}$ of integers
 as a vertex set and in which two distinct vertices $i, j \in \mathbb{Z}$ are adjacent if and only if $|i - j| \in D$. The study of 
 a coloring of distance graphs was initiated by Eggleton et al. \cite{Eggl} and a lot of papers concerning this topic have been published
 (see \cite{ChaHuZhu}, \cite{KemKo}, \cite{LiLaSo}, \cite{RuzTuVoi}, \cite{Zhu} for a sample of results). 
 
 The study of a packing coloring of distance graphs was initiated by Togni. In \cite{Togni} Togni showed that 
 $\chi_{\rho}(D(1, t)) \leq 40$ for odd $t \geq 447$ and that $\chi_{\rho}(D(1, t)) \leq 81$ for even $t \geq 448$. Ekstein et al. 
 in \cite{Ekstein} improved these upper bounds and proved that $\chi_{\rho}(D(1, t)) \leq 35$ for odd $t \geq 575$ and that 
 $\chi_{\rho}(D(1, t)) \leq 56$ for even $t \geq 648$. A lower bound 12 for the packing chromatic number of $D(1, t)$, for $t \geq 9$,
 is also given in \cite{Ekstein}.
  
 In this paper we generalize mentioned results as follows. 
 
 \bigskip
 
 \begin{theorem}
  \label{k, t odd}
   Let $k, t$ be odd positive integers such that $t \geq 825$ and $k, t$ coprime (i. e. $D(k, t)$ is a~connected distance graph). 
   Then $$\chi_{\rho}(D(k, t)) \leq 30.$$
 \end{theorem}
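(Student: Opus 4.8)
The plan is to exploit the parity of $k$ and $t$ together with the elementary fact that distances in $D(k,t)$ are governed by a Diophantine minimization. First I would observe that, since $k$ and $t$ are both odd, every edge of $D(k,t)$ joins an even integer to an odd one; hence $D(k,t)$ is bipartite with parts $2\Z$ and $2\Z+1$, each of which is an independent set and therefore a legitimate $1$-packing. I would assign colour $1$ to all of $2\Z$. This is optimal for colour $1$, because every independent set in $D(k,t)$ has density at most $1/2$. It then remains to colour the odd integers using the colours $2,3,\dots,30$ so that each class $X_i$ is an $i$-packing; the whole difficulty is concentrated here.

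The main tool for the remaining construction is the distance formula
\[
\mathrm{dist}_{D(k,t)}(x,y)=\min\{\,|a|+|b| : ak+bt=y-x,\ a,b\in\Z\,\},
\]
which holds because a walk of length $\ell$ realises exactly the differences $ak+bt$ with $|a|+|b|\le\ell$. Since $\gcd(k,t)=1$, the solutions of $ak+bt=\delta$ form the coset $\{(a_0+mt,\,b_0-mk):m\in\Z\}$, so the distance is obtained by minimising $|a_0+mt|+|b_0-mk|$ over $m$. I would now build an explicit colouring of the odd integers that is periodic with a large period $P$ (a suitable multiple of $2t$ chosen so that $\mathrm{dist}_{D(k,t)}(0,P)>30$), placing the small colours on a dense regular sub-pattern and the larger colours sparsely in the remaining positions; note that density is never the obstruction, since $\sum_{i=2}^{30}\tfrac1{i+1}$ far exceeds $1/2$. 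The design is guided by the observation that multiplication by $k^{-1}\pmod{2t}$ carries $\{k,t\}$ to $\{1,t\}$ modulo $2t$ (using that $t\cdot(\text{odd})\equiv t$), so locally, modulo $2t$, the graph $D(k,t)$ has the same combinatorial type as $D(1,t)$; this lets me transport the known $D(1,t)$-type pattern. However, because the packing constraints refer to genuine distances in the infinite graph and the period must exceed $2t$, this congruence serves only as a design principle, and validity must be checked directly.

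The verification is the heart of the proof and the step I expect to be hardest. For each colour $i\in\{2,\dots,30\}$ and each pair $x,y$ of odd integers receiving colour $i$, I must show $\mathrm{dist}_{D(k,t)}(x,y)>i$; by the formula above this means proving that \emph{every} representation $y-x=ak+bt$ satisfies $|a|+|b|>i$. Since within one period there are only finitely many difference values $\delta$ to examine, and the period-translates contribute the differences $\delta+nP$, the task reduces to a finite collection of Diophantine minimisations that must hold simultaneously for \emph{all} odd $k$ coprime to $t$. This uniformity in $k$ is the crux: one cannot assume $k$ is small relative to $t$, so for a given $\delta$ one must rule out both representations using few $t$-steps (forcing many $k$-steps) and those using few $k$-steps, for every admissible $k$. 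The hypothesis $t\ge 825$ is exactly what guarantees that each of these minima exceeds the corresponding colour value, and the bulk of the work is the (routine but lengthy) case analysis verifying these inequalities for the finitely many differences occurring in the pattern.
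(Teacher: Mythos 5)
Your proposal sets up the frame correctly --- bipartiteness of $D(k,t)$ for $k,t$ odd, the word-length distance formula $\mathrm{dist}_{D(k,t)}(x,y)=\min\{|a|+|b| : ak+bt=y-x,\ a,b\in\Z\}$, and the recognition that everything reduces to distance inequalities that must hold uniformly in $k$ --- but it stops exactly where the proof has to begin. No pattern is ever exhibited, and the two load-bearing claims are left unsupported: (i) that a periodic colouring of the required kind exists at all, and (ii) that $t\geq 825$ ``is exactly what guarantees'' the needed Diophantine minima. Nothing in your argument produces the constant $825$, nor even a mechanism by which \emph{some} finite threshold would emerge. Worse, since you rightly note that $k$ may be comparable to $t$, your ``finite collection of Diophantine minimisations'' is not actually finite: it is a family of verifications indexed by infinitely many admissible $k$, and you supply no device that collapses it to finitely many cases. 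The mod-$2t$ transport idea does not fill this hole: the known colourings of $D(1,t)$ are not periodic with period dividing $2t$, so transporting them along multiplication by $k^{-1}\pmod{2t}$ is undefined, and you concede yourself that the congruence controls only adjacency, not distances, which is precisely what packing constraints need. A smaller slip: your feasibility remark that $\sum_{i=2}^{30}\tfrac{1}{i+1}$ far exceeds $1/2$ uses the path bound $1/(i+1)$ for the density of an $i$-packing, but in $D(k,t)$ a ball of radius $i$ has order $i^{2}$ vertices, so achievable class densities are far smaller; that sum is not evidence that colours $2,\dots,30$ suffice on the odd integers.

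For comparison, the paper supplies exactly the uniformity mechanism you are missing. It decomposes $D(k,t)$ into $s$ \emph{strips} of $24$ consecutive bands, each strip isomorphic \emph{for every} $k$ to a strip of $D(1,t)$ and coloured by a fixed $24\times 24$ pattern with colours $1,\dots,15,22,23$ (Lemma \ref{Strip}), together with $r\leq 33$ leftover \emph{bands}, each an infinite path, coloured by an explicit period-$144$ pattern with colours $1,16,\dots,21,24,\dots,30$ (Lemma \ref{lem-2bands}). The dependence on $k$ is confined to the residue $k_{1}=\min\{k\ (\mathrm{mod}\ 24),\, 24-k\ (\mathrm{mod}\ 24)\}$, which only shifts the starting offsets of the patterns; this is what makes the verification uniform over all odd $k$ coprime to $t$. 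The threshold then has a transparent origin: writing $t=24s+r$ with $r$ odd, $k_{1}\leq r\leq 33$, one needs $s\geq r$, and the worst case $r=33$ gives $t\geq 25\cdot 33=825$. To complete your argument you would have to either exhibit and verify a concrete pattern --- at which point you would almost certainly reinvent a strip/band decomposition --- or find a genuinely different device for uniformity in $k$; as written, the proposal is a plan, not a proof.
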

  
 Note that, for $k = 1$, Theorem \ref{k, t odd} also improves the upper bound given in~\cite{Ekstein}.
 
 \begin{corollary}
  For any odd positive integer $t\geq 825$, $\chi_{\rho}(D(1,t))\leq 30$.
 \end{corollary}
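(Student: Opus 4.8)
The plan is to derive the statement directly from Theorem~\ref{k, t odd} by specializing to $k=1$. First I would check that the four hypotheses of the theorem hold for this choice. The integer $1$ is odd, and the corollary's assumption supplies that $t$ is odd with $t\geq 825$. The remaining condition is coprimality of $k$ and $t$: since $\gcd(1,t)=1$ for every positive integer $t$, the pair $(1,t)$ is automatically coprime, so $D(1,t)$ is connected (indeed, adjacency by difference $1$ alone already links all of $\Z$ into a single infinite path). With all hypotheses verified, Theorem~\ref{k, t odd} applies and gives $\chi_{\rho}(D(1,t))\leq 30$ for every odd $t\geq 825$.

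There is no substantive obstacle here: the corollary is a pure specialization, and the only condition that is not already part of the corollary's hypotheses — coprimality — is satisfied vacuously for $k=1$. The point worth making explicit is that the general theorem requires $k$ and $t$ coprime precisely to guarantee connectedness of $D(k,t)$, and for $k=1$ this requirement imposes no restriction on $t$ at all, which is exactly why the bound $30$ transfers to the whole family $D(1,t)$ with odd $t\geq 825$.
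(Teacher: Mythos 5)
Your proof is correct and matches the paper's (implicit) argument exactly: the corollary is stated as an immediate specialization of Theorem~1 to $k=1$, where the coprimality hypothesis holds automatically since $\gcd(1,t)=1$. Your verification of the hypotheses is complete and there is nothing further to add.
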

  
 \begin{theorem}  
  \label{k odd, t even} 
   Let $k, t$ be positive integers such that $k$ is odd, $t \geq 898$ is even and $k, t$ coprime
   (i.e. $D(k, t)$ is a~connected distance graph). Then $$\chi_{\rho}(D(k, t)) \leq 56.$$  
 \end{theorem}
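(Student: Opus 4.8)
The plan is to exhibit an explicit periodic packing coloring, and the conceptual backbone I would use is that $D(k,t)$ is a quotient of the square grid. Sending the integer $ak+bt$ to the lattice point $(a,b)\in\Z^2$ identifies $D(k,t)$ with $\Z^2/\langle(t,-k)\rangle$ carrying the usual grid-graph adjacencies (a $k$-step is the move $(1,0)$, a $t$-step the move $(0,1)$), because $\gcd(k,t)=1$ makes $(t,-k)$ a generator of the kernel of $(a,b)\mapsto ak+bt$. Consequently the graph distance between two integers differing by $m$ equals $\min\{|a|+|b|:ak+bt=m\}$, the minimum $\ell_{1}$-norm over the corresponding coset. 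First I would record a distance lemma quantifying this: since $t$ is large, any two vertices at graph distance at most $56$ either differ by a small amount (realized essentially by $k$-steps) or differ by something inside a bounded window around a multiple of $t$. This reduces every packing condition to finitely many local checks together with a single check "across one period''.

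Then I would construct the coloring in the grid model. As in the odd case one wants color $1$ to occupy an independent set of density close to $1/2$, while the remaining vertices are distributed among the colors $2,\dots,56$, each occupying a sparse periodic subset; the sparser the subset, the larger the index $i$ it can safely carry, since the requirement is only pairwise distance $>i$. The crucial contrast with Theorem \ref{k, t odd} is parity: here $k$ is odd and $t$ even, so the grid checkerboard $\{a+b\ \text{even}\}$ — the natural density-$1/2$ independent set — is \emph{not} invariant under the identification $(a,b)\sim(a+t,b-k)$, because $t-k$ is odd. Equivalently, $D(k,t)$ is now non-bipartite. I would therefore build the low colors from a pattern of period $2k$ in the $a$-direction (adapted to the $k$-steps) together with a compatible period in the $b$-direction, and glue the two ends of a fundamental domain of one period of $(t,-k)$ by inserting a bounded \emph{transition region} along the wraparound seam.

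The verification then proceeds color by color. For the dense low colors the $i$-packing condition is a finite local computation in the grid, independent of $t$; for the sparse high colors one invokes the distance lemma to reduce to checking that two equally colored vertices within one period, and two such vertices separated by exactly one application of $(t,-k)$, are always more than $i$ apart — and here $t\ge898$ is precisely what guarantees that crossing the seam contributes distance comparable to $t$, so no conflict arises. One must also run a short case analysis on the residues of $t$ and $k$ modulo the pattern period, ensuring that the periodic pattern closes up consistently for every admissible coprime pair $(k,t)$.

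The main obstacle is exactly this parity defect. Because the twist vector $(t,-k)$ is odd, the clean density-$1/2$ pattern cannot descend, and the transition region needed to reconcile the two sides of the seam destroys the local regularity that kept the palette small in the odd case; repairing the coloring inside and near the seam is what inflates the number of colors from $30$ to $56$. Pinning down a transition pattern that simultaneously (i) uses no color beyond $56$, (ii) keeps each sparse color class an $i$-packing across the seam, and (iii) works uniformly over all residues of $k$ and $t$ — together with extracting the explicit threshold $t\ge898$ — is the technical heart of the argument, while the remaining estimates are routine once the distance lemma is in hand.
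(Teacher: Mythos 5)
Your quotient-grid model is a faithful restatement of the paper's spiral picture (a connected $D(k,t)$ drawn as $k$ spirals crossed by $t$ orthogonal lines), and your parity diagnosis is correct: with $k$ odd and $t$ even the graph is non-bipartite and no density-$1/2$ checkerboard descends to the quotient, which is indeed why this case costs more colors than Theorem~\ref{k, t odd}. But beyond that diagnosis your proposal contains no proof: no pattern is exhibited, the ``transition region'' is never specified, and no mechanism is given that produces the constants $56$ or $898$ --- you yourself defer all of this as ``the technical heart.'' A bound like $\chi_{\rho}(D(k,t))\leq 56$ can only come from an explicit construction, so what you have is a plan whose central component is missing. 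One concrete design choice is also suspect: a low-color pattern ``of period $2k$ in the $a$-direction'' makes the verification depend on $k$, whereas the whole point of the paper's construction is a \emph{fixed} $24\times 24$ pattern (Fig.~\ref{Pattern 24x24}, colors $1,\dots,15,22,23$; Lemma~\ref{Strip}) whose correctness is checked once, uniformly in $k$ and $t$.

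More seriously, the one quantitative claim you do make about the seam is false. Crossing the seam contributes distance $1$, not ``distance comparable to $t$'': in $\Z^2/\langle(t,-k)\rangle$ with fundamental domain $\{0\leq a<t\}$, the vertex $(t-1,b)$ is adjacent to $(0,b+k)$ --- in $\Z$, two integers differing by exactly $t$ are neighbors. What $t\geq 898$ buys is \emph{width} of the fundamental domain (in the paper, $t=24(s+2)+r$ with $s\geq r$ and worst case $r=34$, whence $24\cdot 36+34=898$), not remoteness of the seam; so the packing condition for your sparse high colors fails precisely at the one place where there was something to prove. The paper resolves this not by estimating seam distances but by sacrificing an entire band there: all bands except one are colored by a period-$144$ pattern on colors $1,16,\dots,21,24,\dots,30$ with adjustable offsets $j_i$ (Lemma~\ref{lem-2bands}), while the leftover band $B_{24(s+2)+r-1}$ is colored via Proposition~\ref{Goddard} with $l=18$, i.e.\ with colors $18,\dots,3l+2=56$ --- this is the actual origin of the constant $56$ --- with $22$ and $23$ (reserved for strips) traded for $16$ and $17$. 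Placing two consecutive strips $S_{0}S_{24}$ next to this special band keeps it at distance at least $49$ from every other band, so its colors greater than $48$ occur nowhere else and the shared smaller colors cannot conflict. Without an idea of this kind (or a genuine substitute for it), your outline cannot be completed as stated.
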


 \begin{theorem}
  \label{k even, t odd} 
   Let $k, t$ be positive integers such that $k$ is even, $t \geq 923$ is odd and $k, t$ coprime
   (i. e. $D(k, t)$ is a~connected distance graph). Then $$\chi_{\rho}(D(k, t)) \leq 56.$$  
 \end{theorem}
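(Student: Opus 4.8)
The plan is to follow the periodic-template strategy underlying Theorems~\ref{k, t odd} and~\ref{k odd, t even}, adapting the colour classes to the parity pattern forced by $k$ even and $t$ odd. The first step is to record the distance formula in $D(k,t)$: since every walk is a sequence of steps $\pm k,\pm t$, one has
$$\mathrm{dist}_{D(k,t)}(i,j)=\min\{\,|\alpha|+|\beta|:\ \alpha k+\beta t=j-i,\ \alpha,\beta\in\Z\,\}.$$
The consequence I would exploit is that, for $t$ large, every pair of vertices at packing-distance at most $56$ is \emph{local}: if $|\alpha|+|\beta|\le 56$ and $\beta\neq 0$, then $|\alpha k+\beta t|\ge t-56k$, so no such pair can have $|i-j|\le t-56k$. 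Consequently a pair at distance $\le 56$ either differs by a multiple of $k$ of size at most $56k$ (the two vertices then lie on the same ``$k$-path'' $\dots,x-k,x,x+k,\dots$, i.e.\ in one residue class modulo $k$), or its difference lies within $56k$ of a nonzero multiple of $t$. This reduces the verification of the whole infinite colouring to a bounded window, and it is exactly here that the explicit threshold $t\ge 923$ enters.

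Second, I would construct a colouring of $\Z$ periodic with a period $P$ chosen as a suitable multiple of $t$ (large enough for the local reduction above, and compatible with $t$ modulo the internal pattern length). Colour $1$ is placed on a periodic independent set $X_1$ of density close to $\frac12$, containing no two vertices differing by $k$ or by $t$; because $k$ is even this cannot be a full parity class, so $X_1$ must be built by hand from blocks of consecutive residues modulo $2k$ whose phase is aligned with $t\bmod 2k$. On each $k$-path the remaining vertices are then coloured by a short periodic word in the low colours (mimicking the packing $3$-colouring $1,2,1,3,\dots$ of the infinite path, which governs the within-path distances $k,2k,\dots$), and the higher colours, up to $56$, are used sparingly to repair the finitely many conflicts that the $t$-edges create between the patterns of neighbouring paths. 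The target $56$ is precisely the size of palette this repair requires.

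The hard part will be the interaction introduced by $k$ being even. Unlike the both-odd case of Theorem~\ref{k, t odd}, where the even integers form an independent set and carry colour $1$ for free, here same-parity vertices are joined by $k$-edges, so $D(k,t)$ is not bipartite and the clean split ``colour $1$ on half the vertices, pack the rest'' is unavailable; one must instead verify directly that each class $X_i$ is an $i$-packing, uniformly over all admissible $t$. Since $t$ is odd, the endpoints $v,v+t$ of a $t$-edge have opposite parity (the reverse of the situation in Theorem~\ref{k odd, t even}), which shifts the phase at which the path-patterns of adjacent $k$-paths must be matched and drives a finite but sizable case analysis on $t\bmod P$. I expect the bulk of the work, and the reason for the specific bound $t\ge 923$, to lie in checking that for every residue of $t$ the constructed template leaves no monochromatic pair within distance $i$ in class $X_i$, for each $i\le 56$.
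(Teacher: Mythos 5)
Your proposal is a program, not a proof, and the step on which everything rests --- ``the higher colours, up to $56$, are used sparingly to repair the finitely many conflicts that the $t$-edges create'' --- is exactly the content of the theorem and cannot be waved through. The conflicts created by $t$-edges are not finitely many in any useful sense: every vertex has two $t$-neighbours, so the interactions between the words on adjacent $k$-paths occur at positive density, at every position. Since each class $X_i$ must be an $i$-packing, colour $i$ can be used with density at most about $1/(ti-\alpha)$ (cf.\ the density discussion in Section \ref{Small t}), so high colours cannot ``patch'' a positive-density set of conflicts; whether the palette $\{1,\dots,56\}$ closes the budget is precisely what has to be exhibited, and your assertion that $56$ ``is precisely the size of palette this repair requires'' is circular. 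No template is constructed and no packing condition is verified, so there is no proof to check. A secondary but genuine flaw: your locality reduction degenerates for large $k$. The theorem allows any even $k<t$ coprime to $t$, and once $k\ge t/56$ the bound $|\alpha k+\beta t|\ge t-56k$ is vacuous; your claimed bounded-window verification, and with it your proposed explanation of the threshold, breaks down in this range. In the paper the threshold $t\ge 923$ has nothing to do with a locality radius: it comes from the arithmetic of the decomposition $t=24(s+2)+r$ with $r$ odd, $k_1<r\le 35$ and $s\ge r$, whose worst case is $24\cdot 37+35=923$.

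For contrast, the paper's proof avoids your per-residue-class analysis entirely by drawing $D(k,t)$ as $k$ spirals and decomposing it into strips and bands: each strip $S_i$ (a union of $24$ consecutive bands) is isomorphic to a strip of $D(1,t)$ and is coloured by the modified $24\times24$ pattern of Fig.~\ref{Pattern 24x24} with colours $\{1,\dots,15,22,23\}$ (Lemma \ref{Strip}); the $r$ separating bands are coloured by the $144$-periodic pattern $(\ast)$ with colours $\{1,16,\dots,21,24,\dots,30\}$ and carefully chosen offsets $j_i$ (Lemma \ref{lem-2bands}); and one final band is coloured with $18,19,\dots,56$ via Proposition \ref{Goddard} with $l=18$ (swapping $22,23$ for $16,17$), exploiting that this band is at distance at least $49$ from every other band. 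The even-$k$/odd-$t$ parity issue you worry about is absorbed into a special case ($k\equiv 0 \pmod{24}$, $t=24s+1$) plus the parity discipline on the offsets $j_i$, after which the proof is verbatim that of Theorem \ref{k odd, t even}. Your parity observations (that $t$-edges flip parity while $k$-edges preserve it, so the even integers are not independent) are correct and correctly identify why this case differs from Theorem \ref{k, t odd}, but identifying the obstacle is not the same as overcoming it.
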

 
 \begin{theorem}
  \label{general lower bound}
  Let $D(k, t)$ be a connected distance graph, $t \geq 9$. Then $$\chi_{\rho}(D(k, t)) \geq 12.$$
 \end{theorem}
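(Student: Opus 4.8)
The plan is to prove the bound by a density (counting) argument, after first recording that, locally, $D(k,t)$ is indistinguishable from the square grid. Writing $\phi(a,b)=ak+bt$, every edge of $D(k,t)$ changes a vertex by $\pm k$ or $\pm t$, so
$$\mathrm{dist}_{D(k,t)}(0,n)=\min\{\,|a|+|b|\ :\ ak+bt=n\,\},$$
i.e.\ the metric of $D(k,t)$ is the image of the $\ell_1$ (grid) metric of $\mathbb{Z}^2$ under $\phi$, and $D(k,t)$ is precisely the quotient of the grid $\mathbb{Z}^2$ (with its usual adjacency) by the rank-one lattice generated by the relation vector $(t,-k)$, of $\ell_1$-norm $t+k$. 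Two grid points have the same $\phi$-value only if they differ by a nonzero multiple of $(t,-k)$, which changes the first coordinate by at least $t$; hence the ball $B_r(0)$ of $D(k,t)$ is isometric to the grid ball of radius $r$ for small $r$, and in particular $|B_r(0)|=2r^2+2r+1$ whenever $2r<t$, so for all $r\le 4$ uniformly over every admissible pair $k,t$ with $t\ge 9$. Thus all short-range distance constraints in $D(k,t)$ are exactly the grid constraints.

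I would then bound $\chi_{\rho}$ from below by the standard density inequality. Let $\delta_i$ denote the largest possible density of an $i$-packing in $D(k,t)$. Since the colour classes of any packing colouring partition $\mathbb{Z}$, one has $\sum_{i\ge 1}\delta_i\ge 1$, so $\chi_{\rho}(D(k,t))\ge 12$ as soon as $\sum_{i=1}^{11}\delta_i<1$. Because the balls $B_{\lfloor i/2\rfloor}$ centred at the vertices of an $i$-packing are pairwise disjoint, we get the volume bound $\delta_i\le 1/|B_{\lfloor i/2\rfloor}|=1/(2r^2+2r+1)$ with $r=\lfloor i/2\rfloor$, valid here since $r\le 4$; and separately $\delta_1\le\tfrac12$, because the $\pm k$ edges alone split $\mathbb{Z}$ into $k$ bi-infinite paths and hence furnish a perfect matching of $D(k,t)$, so colour $1$ is an independent set of density at most $\tfrac12$.

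The difficulty, and the real content of the proof, is that these volume bounds are not by themselves strong enough: the partial sum $\tfrac12+\tfrac15+\tfrac15+\tfrac{2}{13}+\cdots$ already exceeds $1$, exactly as it does for the infinite grid $\mathbb{Z}^2$, whose packing chromatic number ($=15$) is far beyond what volume counting alone can certify. One must therefore replace the independent per-colour volume bounds by a single sharper estimate that accounts for the mutual interference of the small-colour classes over a fixed finite window: concretely, fix a grid-shaped window (legitimate because, for $t\ge 9$, the injectivity radius is large enough that the induced metric there is the grid metric) and show, by a discharging or exhaustive case analysis, that colours $1,\dots,11$ cannot cover it. Since the local structure equals that of the grid and is the same for every $k,t$ with $t\ge 9$, such a window argument is independent of $k$ and $t$, and it is tightest at $t=9$, where the grid-like region is smallest. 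I expect this finite verification to be the genuine obstacle: it is of the same nature as the (computer-assisted) lower bounds for the packing chromatic number of the square grid, and it generalises the argument of Ekstein et al.\ giving $\chi_{\rho}(D(1,t))\ge 12$ for $t\ge 9$ to arbitrary admissible $k$.
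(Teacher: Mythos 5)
Your reduction is sound as far as it goes --- the map $\phi(a,b)=ak+bt$, the identification of $D(k,t)$ as the quotient of the grid $\mathbb{Z}^2$ by the vector $(t,-k)$, and the diagnosis that per-colour volume bounds cannot push $\sum_{i=1}^{11}\delta_i$ below $1$ are all correct --- but there is a genuine gap exactly where you place the ``real content'': you never carry out, nor cite, the finite verification that colours $1,\dots,11$ cannot cover a grid-shaped window; you only express the expectation that such a verification should exist and would be the obstacle. That step \emph{is} the theorem; without it nothing is proved. The paper closes it in one line by invoking the published computer-assisted result of Ekstein, Fiala, Holub and Lidick\'y \cite{EksteinFiala} that the finite $15\times 9$ square lattice admits no packing colouring with $11$ colours, combined with subgraph monotonicity: if $H\subseteq G$ then $\mathrm{dist}_H(u,v)\ge \mathrm{dist}_G(u,v)$, so every colour class of a packing colouring of $G$ restricts to a packing of $H$, whence $\chi_{\rho}(H)\le\chi_{\rho}(G)$. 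Since $\gcd(k,t)=1$, a collision $\phi(a,b)=\phi(a',b')$ forces $|a-a'|$ to be a positive multiple of $t$, so $\phi$ is injective on any box whose $a$-extent is at most $t$ consecutive values; thus $t\ge 9$ already embeds the $15\times 9$ grid, with the side of length $9$ in the $a$-direction, as a subgraph of $D(k,t)$, and $\chi_{\rho}(D(k,t))\ge \chi_{\rho}(15\times 9 \text{ grid})\ge 12$. That is the paper's entire proof.

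Two further remarks. First, your insistence on \emph{isometric} windows (the injectivity-radius discussion) is stronger than needed: for a lower bound, mere subgraph containment suffices, because ambient shortcuts only make the packing constraints harder, so injectivity of $\phi$ on the box is all that is required --- this is also why $t\ge 9$ is enough even though, as you note, balls of radius $5$ fail to be grid balls at $t=9$. Second, the density estimates $\delta_1\le 1/2$ and $\delta_i\le 1/(2r^2+2r+1)$ with $r=\lfloor i/2\rfloor$ are a dead end that you correctly identified yourself; in the completed argument they play no role. Had you recalled that a finite-lattice lower bound of precisely the kind you postulate is already in the literature --- the same result \cite{EksteinFiala} underlies the bound $\chi_{\rho}(D(1,t))\ge 12$ of \cite{Ekstein} that you mention --- your outline would have collapsed to the paper's one-line proof.
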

 
 For $k$, $t$ both even and also for $k, t$ commensurable, the distance graph $D(k, t)$ is disconnected and contains copies of
 a distance graph $D(k', t')$ as its components with $k' < k$, $t' < t$, at least one of $k', t'$ odd and $k', t'$ coprime 
 (as will be shown in a proof of Lemma \ref{coprime}). In the view of this fact, we can color each copy of $D(k',t')$ in the same way, thus we obtain the following statement.
 
 \begin{proposition}
  Let $k,t$ be positive integers and $g$ their greatest common divisor. 
  Then $\chi_{\rho}(D(k,t))=\chi_{\rho}\left(D(\frac kg, \frac tg)\right)$.
 \end{proposition}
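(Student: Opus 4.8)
The plan is to show that $D(k,t)$ splits into $g$ pairwise-isomorphic components, each a copy of $D(k/g,t/g)$, and then to argue that for a disjoint union of isomorphic graphs the packing chromatic number coincides with that of a single summand.

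First I would record the structural decomposition, which is essentially the content of the proof of Lemma~\ref{coprime}. Writing $k=gk'$ and $t=gt'$ with $\gcd(k',t')=1$, observe that every edge of $D(k,t)$ joins vertices $i,j$ with $|i-j|\in\{gk',gt'\}$, so $i\equiv j\pmod g$. Hence no edge crosses between two distinct residue classes modulo $g$, and $D(k,t)$ is the disjoint union of the $g$ subgraphs induced on the classes $r+g\Z$, $r=0,1,\dots,g-1$. For each fixed $r$ the bijection $r+gm\mapsto m$ sends the class $r+g\Z$ onto $\Z$ and turns the adjacency condition $|g(m-m')|\in\{gk',gt'\}$ into $|m-m'|\in\{k',t'\}$; thus each component is isomorphic to $D(k',t')=D(k/g,t/g)$.

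It remains to pass from this decomposition to the equality of packing chromatic numbers. Here I would use the standard convention that vertices lying in different components are at distance $\infty$, so that any two such vertices may always share a color. Consequently a coloring of $D(k,t)$ is a packing $p$-coloring if and only if its restriction to each component is one; the lower bound $\chi_{\rho}(D(k,t))\ge\chi_{\rho}(D(k/g,t/g))$ follows by restricting an optimal coloring to a single copy, and the matching upper bound follows by taking an optimal packing coloring of $D(k/g,t/g)$ and using the very same coloring on every copy simultaneously, which remains valid precisely because inter-component distances are infinite.

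I do not expect a genuine obstacle here: the only point requiring care is the convention for distances between components of a disconnected graph, together with the resulting observation that packing colorings of a disjoint union decompose componentwise. Once that is fixed, the isomorphism of each component with $D(k/g,t/g)$ yields the claim immediately.
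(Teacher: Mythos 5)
Your proof is correct and takes essentially the same route as the paper: the paper obtains the modulo-$g$ decomposition of $D(k,t)$ into $g$ isomorphic copies of $D(\frac kg,\frac tg)$ inside the proof of Lemma~\ref{coprime} and then, just as you do, colors every copy identically. Your explicit remarks on the infinite inter-component distance convention and on the lower bound via restriction to one component merely spell out what the paper leaves implicit, so there is no substantive difference.
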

 
 For small values of $k, t$ we give lower and upper bounds as it is shown in the following Table~\ref{tb1}. Note that, for $k = 1$, 
 an analogous table was published in \cite{Ekstein}.
 
 \bigskip
 
 \begin{table}[ht]
  \centering
  \footnotesize
  \begin{tabular}{|c||c|c|c|c|c|c|c|c|}
  \hline
  k $\diagdown$ t  & 3 & 4 & 5 & 6 & 7 & 8 & 9 & 10 \\
  \hline
  \hline
  2 & {\bf 13} & $D(1,2)$ & $14- 22 $ & $D(1,3)$ & $15- 27 $ & $D(1,4)$ & $12- 31$ & $D(1,5)$ \\
  \hline
  3  & --- & $14- 19 $ & {\bf 13} & $D(1,2)$ & $13- 17$ & $14- 28 $ & $D(1,3)$& $13 - 29 $ \\
  \hline
  4  & --- & --- & $13- 22 $ & $D(2,3)$ & $16 - 32 $ & $D(1,2)$ & $15- 32$ & $D(2,5)$ \\
  \hline
  5 & --- & --- & --- & $15 - 29 $ & $13 - 20$ & $14- 32 $ & $13- 23$ & $D(1,2)$ \\
  \hline
  6 & --- & --- & --- & --- & $15- 29 $& $D(3,4)$ & $D(2,3)$ & $D(3,5)$ \\
  \hline
  7 & --- & --- & --- & --- & --- & $14- 34 $& $12- 23 $& $12- 40$\\
  \hline
  8 & --- & --- & --- & --- & --- & --- & $12- 37 $ & $D(4,5)$ \\
  \hline
  9 & --- & --- & --- & --- & --- & --- & --- & $12- 42$\\
  \hline
  \end{tabular}
  \caption{\label{tb1} Values and bounds of $\chi_{\rho}(D(k,t))$ for $2 \le k < t \le 10$. The emphasized numbers are exact values and 
  all pairs of values are lower and upper bounds.}
 \end{table}

 Throughout the rest of the paper by a coloring we mean a packing \zlom coloring.

 \section{Bounds for $\chi_{\rho}(D(k, t))$ for small $k$, $t$}
 \label{Small t}
 In this section we determine new lower and upper bounds for the packing chromatic number of $D(k, t)$  which are mentioned in   
 Table~\ref{tb1}.

 For the upper bounds, we found and verified (with a help of a computer) patterns, which can be periodically repeated for a whole 
 distance graph $D(k, t)$. This means that we color vertices $1, \dots, l$ of $D(k,t)$ using a pattern of length $l$ and copy this 
 pattern on vertices $1+pl, \dots, l+pl$, $p \in \mathbb{Z}$. As most of these patterns are of big lengths, they do not appear in this 
 paper, but can be viewed at the web page {\tt http://le2i.cnrs.fr/o.togni/packdist/} and tested using the java applet provider.
 
 \begin{table}[ht]
  \centering
  $$ \begin{array}{|c|c|c|c|c|}
  \hline
  k,t & c & p & \text{ Configurations } & \text{ Time }\\ \hline
  2,3  & 12 & 213 &  1.1\cdot10^{12} & 46 \text{ hours} \\ \hline
  2,5  & 13 & 45 & 5.9\cdot10^{12} &  327 \text{ hours} \\ \hline
  3,4  & 13 &  43 & 5.6\cdot10^{12} & 297 \text{ hours}\\ \hline
  3,5  & 12 &  106 & 5.7\cdot10^{11} & 35 \text{ hours}\\ \hline
  3,7  & 12 &  54 & 2\cdot10^{12} & 179 \text{ hours}\\ \hline
  4,5  & 12 &  37 & 4\cdot10^{11} & 23 \text{ hours}\\ \hline
  \end{array}$$
  \caption{\label{tb2} Computations for finding lower bounds of $\chi_{\rho}(D(k,t))$. Time of the computation is measured on a one-core 
                       workstation from year 2012.}
 \end{table}

 \bigskip
 
 \begin{table}[ht]
  \centering
  $$ \begin{array}{|c|c|c|c|c|}
  \hline
  k,t & q & b & \text{ Configurations } & \text{ Time }\\ \hline
  2,7  & 5 &  37/45  & 32.7\cdot10^9 &  14 \text{ min}\\ \hline
  3,8  & 7 &  38/42  &  1.5\cdot10^{11} & 1 \text{ hours}\\ \hline
  3,10 & 8 &  47/50  &  10^{13} & 125\text{ hours}\\ \hline
  4,7  & 7 &  44/50  &  7.6\cdot10^{12} & 58\text{ hours}\\ \hline
  4,9  & 7 &  47/52  &  1.9\cdot10^{13} & 145\text{ hours}\\ \hline
  5,6  & 6 &  43/50  &  1.1\cdot10^{12} & 7\text{ hours}\\ \hline
  5,7  & 8 &  47/50  &  2.7\cdot10^{12} & 19\text{ hours}\\ \hline
  5,8  & 8 &  42/45  &  7.8\cdot10^{12} & 58\text{ hours}\\ \hline
  5,9  & 7 &  48/52  &  1.2\cdot10^{13} & 107\text{ hours}\\ \hline
  6,7  & 6 &  44/50  &  3.5\cdot10^{12} & 27\text{ hours}\\ \hline
  7,8  & 6 &  50/55  &  1.4\cdot10^{13} & 120\text{ hours}\\ \hline
  \end{array}$$
  \caption{\label{tb3} Computations for finding lower bounds of $\chi_{\rho}(D(k,t))$. Time of the computation is measured on a one-core 
                       workstation from year 2012.}
 \end{table}

 For the lower bounds, we followed methods used in proofs of Lemmas 6 and 8 in \cite{Ekstein}. Some of the lower bounds were obtained 
 using brute force search programs (one in Pascal and one in C++). We showed that a subgraph $D_p(k,t)$ of $D(k,t)$ induced by vertices 
 $\{1,2,\ldots,p\}$ cannot be colored using colors from 1 to $c$, for some $p$ and $c$ (the results of computations can be seen in 
 Table \ref{tb2}), which implies that $\chi_{\rho}(D(k,t))\geq c+1$. For a shortening of a computation time the programs precolored 
 vertex $1$ with color $c$ and tried to extend the coloring for whole $D_p(k,t)$.
 
 For the remaining lower bounds we used a density method. A density of a color class $X_i$ in a packing coloring of $G$ can be defined 
 as a fraction of all vertices of $X_i$ and all vertices of $G$. For the exact definition of densities $d(X_i)$ and 
 $d(X_1 \cup \dots \cup X_i)$ we refer to \cite{Ekstein}.
 
 The density method is based on the following proposition.
 
 \begin{lemma} \emph{\textbf{\cite{bib-fiala09+}}}
  \label{densities}
 For every finite packing coloring with $k$ classes $X_1,X_2, \dots ,X_k$ of a graph $G$ and any positive integer $l$ satisfying 
 $1\leq l\leq k$, it holds that

  $$\sum\limits_{i=1}^{k} d(X_i) \geq d(X_1 \cup \ldots \cup X_l) + \sum\limits_{i=l+1}^{k} d(X_i) \geq d(X_1 \cup \ldots \cup X_k) =1.$$

 \end{lemma}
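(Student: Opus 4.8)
The plan is to reduce the entire chain of inequalities to a single structural property of the density, namely its \emph{subadditivity}, together with the normalization that the whole vertex set has density $1$. The density in force here is an \emph{upper} density: for a set $S\subseteq V(G)$ one measures $|S\cap W|/|W|$ along a fixed family of finite windows $W$ exhausting $V(G)$ (for the distance graphs of this paper, initial segments of $\Z$) and takes the limit superior. I would flag at the outset that the choice of the \emph{upper} density, rather than a genuine limit or a lower density, is exactly what makes the inequalities point in the stated direction, since for an arbitrary, possibly non-periodic, packing coloring the ordinary limit need not exist.

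First I would establish the one fact that does all the work: for any two sets $A,B\subseteq V(G)$ one has $d(A\cup B)\le d(A)+d(B)$. This is immediate from the pointwise bound $|(A\cup B)\cap W|\le |A\cap W|+|B\cap W|$, valid on every window $W$; dividing by $|W|$ and using $\limsup(a_n+b_n)\le \limsup a_n+\limsup b_n$ yields the claim. Iterating over finitely many sets, where the finiteness of the number $k$ of classes is what keeps every sum well defined, gives $\sum_{i=1}^{l} d(X_i)\ge d(X_1\cup\ldots\cup X_l)$ for each $l$ with $1\le l\le k$.

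With subadditivity in hand, both displayed inequalities follow formally. For the first, I add the common tail $\sum_{i=l+1}^{k} d(X_i)$ to both sides of $\sum_{i=1}^{l} d(X_i)\ge d(X_1\cup\ldots\cup X_l)$. For the second, I set $U=X_1\cup\ldots\cup X_l$, regard $X_1\cup\ldots\cup X_k$ as the union $U\cup X_{l+1}\cup\ldots\cup X_k$, and apply subadditivity once more to get $d(U)+\sum_{i=l+1}^{k} d(X_i)\ge d(X_1\cup\ldots\cup X_k)$; note that only subadditivity, not monotonicity, is needed. Finally the trailing equality $d(X_1\cup\ldots\cup X_k)=1$ is simply the statement that the color classes partition $V(G)$, so their union is all of $V(G)$, whose density is $1$ by normalization.

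As for the difficulty, there is no real analytic obstacle once the correct notion of density is fixed; the content of the lemma is entirely the subadditivity of the upper density. The only point that demands care is the legitimacy of passing to $\limsup$ termwise, and in particular using one and the same window family for every class, so that the counting inequality $|(A\cup B)\cap W|\le |A\cap W|+|B\cap W|$ can be applied to all $k$ classes simultaneously \emph{before} any limit is taken. I would also remark that keeping the intermediate term $d(X_1\cup\ldots\cup X_l)$, rather than collapsing it into $\sum_{i=1}^{l}d(X_i)$, is precisely what makes the bound sharp enough for the density method, since $d(X_1\cup\ldots\cup X_l)$ can sometimes be evaluated exactly for small $l$.
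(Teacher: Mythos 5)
Your proof is correct and follows essentially the same route as the source: the paper itself states this lemma without proof, importing it from \cite{bib-fiala09+}, where the argument is exactly the one you describe --- subadditivity of the upper density (via the pointwise counting bound and $\limsup$ of a sum) together with the normalization $d(V(G))=1$ for the partition into color classes. The only cosmetic difference is definitional: the cited proof takes the upper density over balls $B_r(v)$ with a maximum over centers $v$ (so that it applies to arbitrary infinite graphs), whereas you fix a single exhausting window family, but subadditivity and hence the whole chain of inequalities goes through verbatim in either setting.
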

 
 Again using brute force search programs (one in Pascal and one in C++) we showed that the density 
 $d(X_1 \cup X_2 \cup \dots \cup X_q)$ is bounded by $b$, for some $q$ and $b$ (the results are summarized in Table \ref{tb3}).
 For instance, for $k,t=2,7$ we found that $d(1,2,3,4)\le 32/41$. Since there is no pair of vertices in $D_{7i-13}(2,7)$ with distance 
 greater than $i$ ($i\geq 5$), $d(i)=1/(7i-13)$ for $i\geq 5$. Then $d(1,2,3,4) + d(5) + \cdots + d(13) \leq 0.9915326 < 1$ and hence,  
 from Lemma \ref{densities}, $\chi_{\rho}(D(2,7))\ge 14$. The bounds for the other values of $k,t$  in Table~\ref{tb3} are proved 
 similarly, observing that for $D(k,t)$, $d(i)=1/(ti -\alpha)$, with $\alpha = 13$ for $k,t=2,7$ and $i\ge 5$; $\alpha = 17$ for 
 $k,t=3,8$ and $i\ge 8$; $\alpha = 22$ for $k,t=3,10$ and $i\ge 9$; $\alpha = 8$ for $k,t=4,7$ and $i\ge 8$; $\alpha = 21$ for $k,t=4,9$ 
 and $i\ge 8$; $\alpha = 5$ for $k,t=5,7$ and $i\ge 9$; $\alpha = 10$ for $k,t=5,8$ and $i\ge 9$; $\alpha = 19$ for $k,t=5,9$ and 
 $i\ge 8$; and $\alpha = -1$ for $k=t-1$ and $i\ge t-1$.
 
 The lower bounds for $D(2,9)$, $D(7,9)$, $D(7, 10)$, $D(8, 9)$ and $D(9, 10)$ are obtained from Theorem \ref{general lower bound}.
 
 \section{Proofs}
 \label{Large t}
  
  First of all we prove for which $k$, $t$ a distance graph $D(k, t)$ is connected.
 
 \begin{lemma}
  \label{coprime}
  A distance graph $D(k, t)$ is connected if and only if the greatest common divisor of $k$, $t$ is 1.
 \end{lemma}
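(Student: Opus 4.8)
The plan is to reduce the statement to the elementary fact that the set of endpoints of walks starting at a fixed vertex is precisely the subgroup of $\mathbb{Z}$ generated by $k$ and $t$. First I would record that for every $c \in \mathbb{Z}$ the translation $x \mapsto x + c$ is an automorphism of $D(k,t)$, since adjacency depends only on the absolute difference $|i-j|$. Consequently all vertices play symmetric roles, and it suffices to describe the connected component containing the vertex $0$.

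Next I would observe that a walk in $D(k,t)$ from $0$ is exactly a sequence of steps each equal to $+k$, $-k$, $+t$ or $-t$, so the set of vertices reachable from $0$ is $\{ak + bt : a, b \in \mathbb{Z}\}$. By the characterization of subgroups of $\mathbb{Z}$ (equivalently, by B\'ezout's identity), this set equals $g\mathbb{Z}$, where $g = \gcd(k,t)$.

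For the backward implication, suppose $g = 1$. B\'ezout's identity yields integers $a, b$ with $ak + bt = 1$; realizing these coefficients as a walk (taking $|a|$ steps of size $\pm k$ and then $|b|$ steps of size $\pm t$, with signs matching those of $a$ and $b$) produces a walk from $0$ to $1$. Hence $0$ and $1$ lie in the same component, and by the translation automorphisms each consecutive pair $n$, $n+1$ does as well; thus $D(k,t)$ is connected. For the forward implication, suppose $g > 1$. Every step changes the vertex by a multiple of $g$, so the component of $0$ is contained in $g\mathbb{Z} \neq \mathbb{Z}$, and $D(k,t)$ is disconnected.

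The same analysis simultaneously yields the structural claim used later in the paper: when $g > 1$ the components are exactly the residue classes $r + g\mathbb{Z}$ for $0 \le r < g$, and the scaling map $gn \mapsto n$ is an isomorphism from the component $g\mathbb{Z}$ onto $D(k/g, t/g)$, whose parameters $k' = k/g < k$ and $t' = t/g < t$ are coprime and hence not both even. I expect no serious obstacle here; the only points requiring a little care are the passage from the algebraic identity $ak+bt=1$ to an actual walk in the graph, and the verification that the scaling map preserves adjacency, namely that $|gm - gn| \in \{k,t\}$ if and only if $|m-n| \in \{k/g, t/g\}$.
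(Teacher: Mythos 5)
Your proof is correct and follows essentially the same route as the paper: B\'ezout's identity (equivalently, the subgroup $k\mathbb{Z}+t\mathbb{Z}=g\mathbb{Z}$) gives connectivity when $g=1$, and the residue classes modulo $g>1$ give disconnectedness, with the scaling map $gn\mapsto n$ yielding the same component-isomorphism onto $D(k/g,t/g)$ that the paper records in its closing remark. Your version is in fact slightly more careful than the paper's, since you make explicit the passage from the identity $ak+bt=1$ to an actual walk and the translation-automorphism reduction, both of which the paper's proof glosses over.
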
 
  
 \begin{proof}
  If the greatest common divisor of $k$, $t$ is 1, then from linear algebra $1 = mk + nt$, where $m, n \in \Z$. For every vertex 
  corresponding to a number $p \in \Z$ it holds that $p = pmk + pnt$ and therefore there exists a path between this vertex and vertex     
  corresponding to 0. Hence it follows that there exists a path between any pair of vertices and that $D(k, t)$ is connected.
  
  If the greatest common divisor of $k$, $t$ is $p > 1$, than we divide a set of vertices of $D(k, t)$ into subsets in terms of 
  equivalence classes modulo $p$ and clearly there is no edge between any pair of vertices from distinct subsets of vertices of
  $D(k, t)$. Hence $D(k, t)$ is not connected which completes the proof. Moreover, vertices corresponding to the equivalence  classes 
  modulo $p$ induce $p$ isomorphic copies of a graph $D(\frac{k}{p}, \frac{t}{p})$ where $\frac{k}{p}, \frac{t}{p}$ are coprime
  positive integers.
 \end{proof}  
 
 \begin{figure}[ht]
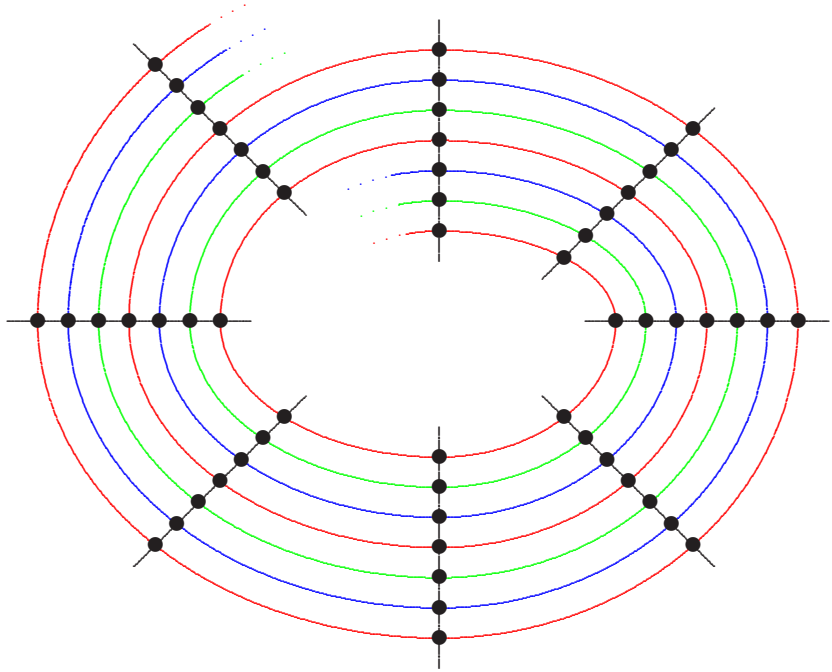

  $$\beginpicture
  \setcoordinatesystem units <.8mm,.8mm>
  \setplotarea x from 20 to 100, y from 10 to 30
  \plot 75 32.5 75 -7.5 /
  \plot 75 60 75 100 /
  \plot 99 50 139 50 /
  \plot 44 50 4 50 /
  \plot 91.97 57.07 120.25 85.36 /
  \plot 91.97 37.63 120.25 9.34 /
  \plot 24.8 9.34  53.08 37.63 /
  \plot 53.08 67.68  24.8 95.96 /
  {\color[rgb]{1,0,0}
  \ellipticalarc axes ratio 29:15 90 degrees from 104 50 center at 75 50
  \setdots \ellipticalarc axes ratio 21:15 35 degrees from 75 65 center at 75 50 \setsolid
  \ellipticalarc axes ratio 21:15 15 degrees from 75 65 center at 75 50
  \ellipticalarc axes ratio 29:22.5 90 degrees from 75 27.5 center at 75 50
  \ellipticalarc axes ratio 36:22.5 90 degrees from 39 50 center at 75 50
  \ellipticalarc axes ratio 36:30 90 degrees from 75 80 center at 75 50
  \ellipticalarc axes ratio 44:30 90 degrees from 119 50 center at 75 50
  \ellipticalarc axes ratio 44:37.5 90 degrees from 75 12.5 center at 75 50
  \ellipticalarc axes ratio 51:37.5 90 degrees from 24 50 center at 75 50
  \ellipticalarc axes ratio 51:45 90 degrees from 75 95 center at 75 50
  \ellipticalarc axes ratio 59:45 90 degrees from 134 50 center at 75 50
  \ellipticalarc axes ratio 59:52.5 90 degrees from 75 -2.5 center at 75 50 
  \setdots
  \ellipticalarc axes ratio 66:60 -62 degrees from 9 50 center at 75 50 
  \setsolid
  \ellipticalarc axes ratio 66:60 -55 degrees from 9 50 center at 75 50 
  \ellipticalarc axes ratio 66:52.5 90 degrees from 9 50 center at 75 50 
  }{\color[rgb]{0,1,0}
  \ellipticalarc axes ratio 34:20 90 degrees from 109 50 center at 75 50
  \setdots \ellipticalarc axes ratio 26:20 32 degrees from 75 70 center at 75 50 \setsolid
  \ellipticalarc axes ratio 26:20 15 degrees from 75 70 center at 75 50
  \ellipticalarc axes ratio 34:27.5 90 degrees from 75 22.5 center at 75 50
  \ellipticalarc axes ratio 41:27.5 90 degrees from 34 50 center at 75 50
  \ellipticalarc axes ratio 41:35 90 degrees from 75 85 center at 75 50
  \ellipticalarc axes ratio 49:35 90 degrees from 124 50 center at 75 50
  \ellipticalarc axes ratio 49:42.5 90 degrees from 75 7.5 center at 75 50
  \ellipticalarc axes ratio 56:42.5 90 degrees from 19 50 center at 75 50
  \ellipticalarc axes ratio 56:50 -55 degrees from 19 50 center at 75 50
  \setdots \ellipticalarc axes ratio 56:50 -63 degrees from 19 50 center at 75 50 \setsolid
  }{\color[rgb]{0,0,1}
  \ellipticalarc axes ratio 39:25 90 degrees from 114 50 center at 75 50
  \setdots \ellipticalarc axes ratio 31:25 29 degrees from 75 75 center at 75 50 \setsolid
  \ellipticalarc axes ratio 31:25 15 degrees from 75 75 center at 75 50
  \ellipticalarc axes ratio 39:32.5 90 degrees from 75 17.5 center at 75 50
  \ellipticalarc axes ratio 46:32.5 90 degrees from 29 50 center at 75 50
  \ellipticalarc axes ratio 46:40 90 degrees from 75 90 center at 75 50
  \ellipticalarc axes ratio 54:40 90 degrees from 129 50 center at 75 50
  \ellipticalarc axes ratio 54:47.5 90 degrees from 75 2.5 center at 75 50
  \ellipticalarc axes ratio 61:47.5 90 degrees from 14 50 center at 75 50
  \ellipticalarc axes ratio 61:55 -55 degrees from 14 50 center at 75 50
  \setdots \ellipticalarc axes ratio 61:55 -64 degrees from 14 50 center at 75 50 \setsolid
  }\setsolid

  \large
  \put{$\bullet$} at 75 65
  \put{$\bullet$} at 75 70
  \put{$\bullet$} at 75 75
  \put{$\bullet$} at 75 80
  \put{$\bullet$} at 75 85
  \put{$\bullet$} at 75 90
  \put{$\bullet$} at 75 95
  \put{$\bullet$} at 104 50
  \put{$\bullet$} at 109 50
  \put{$\bullet$} at 114 50
  \put{$\bullet$} at 119 50
  \put{$\bullet$} at 124 50
  \put{$\bullet$} at 129 50
  \put{$\bullet$} at 134 50
  \put{$\bullet$} at 75 27.5
  \put{$\bullet$} at 75 22.5
  \put{$\bullet$} at 75 17.5
  \put{$\bullet$} at 75 12.5
  \put{$\bullet$} at 75 7.5
  \put{$\bullet$} at 75 2.5
  \put{$\bullet$} at 75 -2.5
  \put{$\bullet$} at 39 50
  \put{$\bullet$} at 34 50
  \put{$\bullet$} at 29 50
  \put{$\bullet$} at 24 50
  \put{$\bullet$} at 19 50
  \put{$\bullet$} at 14 50
  \put{$\bullet$} at 9 50
  \put{$\bullet$} at 95.5 60.5
  \put{$\bullet$} at 95.5 34.09
  \put{$\bullet$} at 49.54 34.09
  \put{$\bullet$} at 49.54 71.21
  \put{$\bullet$} at 106.11 71.21
  \put{$\bullet$} at 106.11 23.48
  \put{$\bullet$} at 38.94 23.48
  \put{$\bullet$} at 38.94 81.82
  \put{$\bullet$} at 116.72 81.82
  \put{$\bullet$} at 116.72 12.88
  \put{$\bullet$} at 28.33 12.88
  \put{$\bullet$} at 99.04 64.14
  \put{$\bullet$} at 99.04 30.55
  \put{$\bullet$} at 46 30.55
  \put{$\bullet$} at 46 74.75
  \put{$\bullet$} at 109.65 74.75
  \put{$\bullet$} at 109.65 19.95
  \put{$\bullet$} at 35.4 19.95
  \put{$\bullet$} at 102.58 67.68
  \put{$\bullet$} at 102.58 27.02
  \put{$\bullet$} at 42.47 27.02
  \put{$\bullet$} at 42.47 78.28
  \put{$\bullet$} at 113.18 78.28
  \put{$\bullet$} at 113.18 16.41
  \put{$\bullet$} at 31.87 16.41
  \put{$\bullet$} at 28.33 92.42
  \put{$\bullet$} at 31.87 88.89
  \put{$\bullet$} at 35.34 85.35
  \endpicture$$
   \caption{Distance graph D(3, 8).}
   \label{D(3,8)}
 \end{figure}
 
 A key observation of this section is that a connected distance graph $D(k, t)$ can be drawn as $k$ vertex disjoint infinite  
 spirals with $t$ lines orthogonal to the spirals (e.g. $D(3, 8)$ on Fig. \ref{D(3,8)}).

 For $i \in \{0, 1, ..., t - 1\}$, the \emph{$i$-band} in a connected distance graph $D(k, t)$, denoted by $B_{i}$, is an infinite
 path in $D(k, t)$ on the vertices $V(B_{i}) = \{ik + jt, j \in \mathbb{Z}\}$. For $i \in \{0, 1, ..., t - 24\}$, the 
 \emph{$i$-strip} in a connected distance graph $D(k, t)$, $t > 24$, denoted by $S_{i}$, is a subgraph of $D(k, t)$ induced by the union
 of vertices of $B_{i}, B_{i + 1}, ..., B_{i + 23}$. 
 
 For a connected graph $D(k, t)$ we use a notation $D(k, t) = S_{0}B_{24}S_{25}B_{49}\ldots$ to express that we view $D(k, t)$ as 
 a union of strips $S_{0},S_{25},\ldots$ and bands $B_{24},B_{49},\ldots$ (including edges between strips and bands).
 
  \begin{figure}[ht]
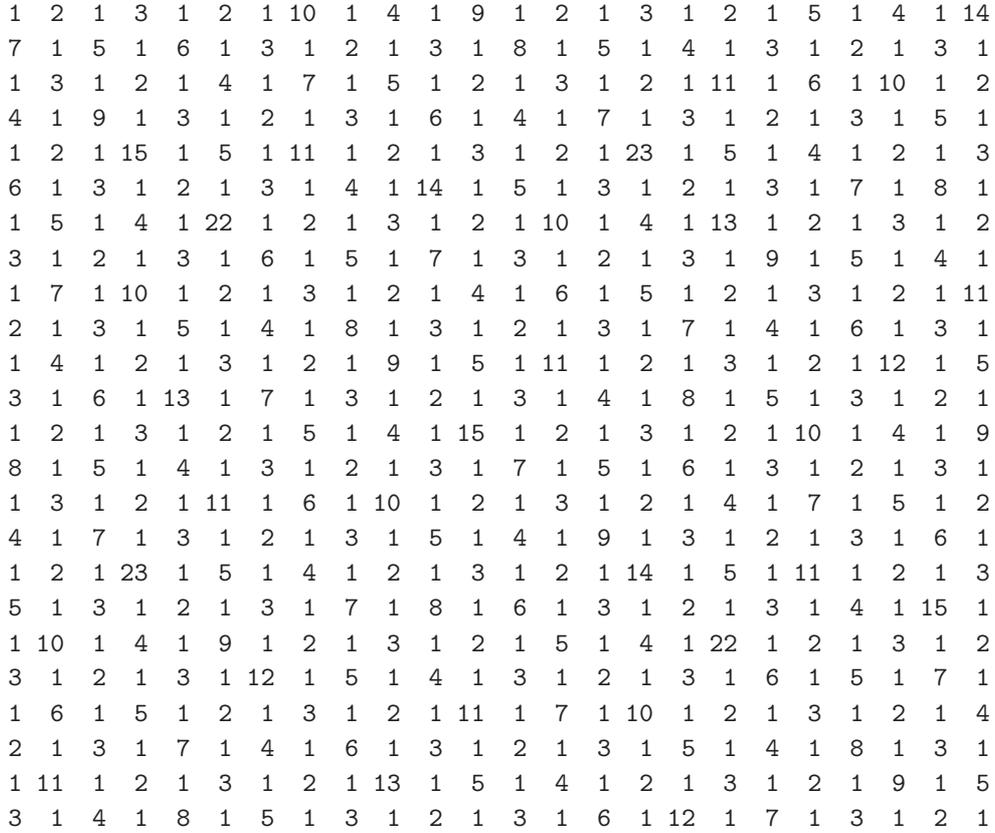

 \centering
 \footnotesize
 \begin{verbatim}
  1  2  1  3  1  2  1 10  1  4  1  9  1  2  1  3  1  2  1  5  1  4  1 14
  7  1  5  1  6  1  3  1  2  1  3  1  8  1  5  1  4  1  3  1  2  1  3  1
  1  3  1  2  1  4  1  7  1  5  1  2  1  3  1  2  1 11  1  6  1 10  1  2
  4  1  9  1  3  1  2  1  3  1  6  1  4  1  7  1  3  1  2  1  3  1  5  1
  1  2  1 15  1  5  1 11  1  2  1  3  1  2  1 23  1  5  1  4  1  2  1  3
  6  1  3  1  2  1  3  1  4  1 14  1  5  1  3  1  2  1  3  1  7  1  8  1
  1  5  1  4  1 22  1  2  1  3  1  2  1 10  1  4  1 13  1  2  1  3  1  2
  3  1  2  1  3  1  6  1  5  1  7  1  3  1  2  1  3  1  9  1  5  1  4  1
  1  7  1 10  1  2  1  3  1  2  1  4  1  6  1  5  1  2  1  3  1  2  1 11
  2  1  3  1  5  1  4  1  8  1  3  1  2  1  3  1  7  1  4  1  6  1  3  1
  1  4  1  2  1  3  1  2  1  9  1  5  1 11  1  2  1  3  1  2  1 12  1  5
  3  1  6  1 13  1  7  1  3  1  2  1  3  1  4  1  8  1  5  1  3  1  2  1
  1  2  1  3  1  2  1  5  1  4  1 15  1  2  1  3  1  2  1 10  1  4  1  9
  8  1  5  1  4  1  3  1  2  1  3  1  7  1  5  1  6  1  3  1  2  1  3  1
  1  3  1  2  1 11  1  6  1 10  1  2  1  3  1  2  1  4  1  7  1  5  1  2
  4  1  7  1  3  1  2  1  3  1  5  1  4  1  9  1  3  1  2  1  3  1  6  1
  1  2  1 23  1  5  1  4  1  2  1  3  1  2  1 14  1  5  1 11  1  2  1  3
  5  1  3  1  2  1  3  1  7  1  8  1  6  1  3  1  2  1  3  1  4  1 15  1
  1 10  1  4  1  9  1  2  1  3  1  2  1  5  1  4  1 22  1  2  1  3  1  2
  3  1  2  1  3  1 12  1  5  1  4  1  3  1  2  1  3  1  6  1  5  1  7  1
  1  6  1  5  1  2  1  3  1  2  1 11  1  7  1 10  1  2  1  3  1  2  1  4
  2  1  3  1  7  1  4  1  6  1  3  1  2  1  3  1  5  1  4  1  8  1  3  1
  1 11  1  2  1  3  1  2  1 13  1  5  1  4  1  2  1  3  1  2  1  9  1  5
  3  1  4  1  8  1  5  1  3  1  2  1  3  1  6  1 12  1  7  1  3  1  2  1
 \end{verbatim} 
 \normalsize
 \caption{A modified pattern on $24\times 24$ vertices.}
 \label{Pattern 24x24}
 \end{figure} 
 
 It is obvious that an $i$-strip in $D(k, t)$ is isomorphic to an $i$-strip in $D(1, t)$. Hence we apply results in \cite{Ekstein} for
 a coloring of strips in $D(k, t)$. Moreover we color vertices of all strips using the pattern on 24$\times$24 vertices made by Holub and
 Soukal in \cite{HolSo}, in which it is possible to replace color 16 (17) by 22 (23), because two vertices colored with 16 (17) are 
 in a whole strip at distance at least 24, respectively (see Fig. \ref{Pattern 24x24}).
 
 \begin{lemma}
  \label{Strip}
   Let $D(k, t)$ be a connected distance graph, $t > 24$, and $S_{i}$ its $i$-strip. Then it is possible to color $S_{i}$ using colors 
   $C = \{1, 2, ..., 14, 15, 22, 23\}$.
 \end{lemma}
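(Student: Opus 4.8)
The plan is to recognise each strip as a grid and then import a known grid coloring. First I would record the graph structure of $S_i$ precisely. Since $\gcd(k,t)=1$, the band $B_i$ consists of all integers congruent to $ik\pmod t$; within $B_i$ consecutive vertices $ik+jt$ and $ik+(j+1)t$ are joined by a $t$-edge, while the $k$-edges join $ik+jt\in B_i$ to $(i+1)k+jt\in B_{i+1}$, leaving the index $j$ unchanged. Because $t>24$, none of the bands $B_i,\dots,B_{i+23}$ wraps around, so $S_i$ is exactly the Cartesian product $P_{24}\,\square\,P_\infty$ --- the same graph as an $i$-strip of $D(1,t)$ --- and the graph distance in $S_i$ equals the Manhattan distance of the grid, since the $24$ available rows always suffice for the vertical part of a shortest path.

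Next I would invoke the Holub--Soukal coloring. Their $24\times 24$ block, repeated with period $24$ along the columns, is a valid packing coloring of $P_{24}\,\square\,P_\infty$ using the colors $\{1,2,\dots,17\}$; validity on the strip is inherited from validity on $\mathbb{Z}^2$, because passing to a subgraph can only increase distances. It then remains to eliminate the two colors $16$ and $17$ in favor of $22$ and $23$.

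The heart of the matter is the distance check that licences this substitution. In the repeated pattern the vertices colored $16$ form two column-periodic families (color $16$ occurs exactly twice in the tile), and similarly for $17$. Writing down the four grid coordinates and minimizing the Manhattan distance over all pairs --- within each family the gaps are multiples of $24$, and across the two families the period-$24$ shift still leaves the distance at least $24$ --- shows that any two vertices of color $16$, and likewise any two of color $17$, are at distance at least $24$. Since $24>23\ge 22$, recoloring every $16$ as $22$ and every $17$ as $23$ respects the packing constraint for the new colors, while the classes $X_1,\dots,X_{15}$ are untouched and hence remain valid. The resulting coloring of $S_i$ therefore uses only the colors of $C=\{1,\dots,15,22,23\}$.

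The only real obstacle is the verification in the previous paragraph: one must confirm that $16$ and $17$ are genuinely the colors whose occurrences are spread far enough to be raised to $22$ and $23$, and that the minimum separation is truly at least $24$ rather than merely the value $18$ that color $17$ guarantees on its own. This is a finite inspection of the explicit pattern --- it suffices to examine the positions of $16$ and $17$ in one $24\times 24$ tile together with their images in the adjacent tiles --- and it is exactly the computer-checked fact quoted before the statement of the lemma.
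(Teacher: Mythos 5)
Your proof is correct and takes essentially the same route as the paper: identify $S_i$ with a $24\times\infty$ strip of the square lattice (isomorphic to a strip of $D(1,t)$), apply the Holub--Soukal $24\times 24$ pattern, and upgrade colors $16$ and $17$ to $22$ and $23$ using the fact that within a single strip (no vertical repetition of the tile) same-colored occurrences of $16$, resp.\ $17$, are at distance at least $24$. The finite inspection you defer at the end is precisely the computer-checked fact the paper invokes (each of $16$ and $17$ occurs twice per tile, and the occurrences lie at Manhattan distance exactly $24$ in the strip), so nothing essential is missing.
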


 Then we can use colors 16 and 17 for coloring of bands as it is explained in the following statement.
 
 \begin{lemma}
  \label{lem-2bands}
  Let $D(k, t)$ be a connected distance graph, $t \geq 56$, and $B_{i}$, $B_{i + 25}$ its bands. Then it is possible to color $B_{i}$ 
  and $B_{i + 25}$ using colors $C = \{1, 16, 17, ..., 21, 24, 25, ..., 30\}$.
 \end{lemma}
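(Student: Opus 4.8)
The plan is to exploit the spiral (equivalently, grid) description of $D(k,t)$. Writing each vertex $n=ak+bt$ via the surjection $\Z^2\to\Z$, $(a,b)\mapsto ak+bt$ (well defined modulo the kernel generated by $(t,-k)$, since $\gcd(k,t)=1$), the graph $D(k,t)$ becomes the square lattice $\Z^2$ quotiented by the translation $(a,b)\mapsto(a+t,b-k)$, where $k$-edges change $a$ by $\pm1$ and $t$-edges change $b$ by $\pm1$. The band $B_i$ is then the vertical line $a\equiv i$, a bi-infinite path in which two vertices of heights $b,b'$ lie at distance $\min_m\bigl(|mt|+|b-b'+mk|\bigr)$. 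First I would record the two distance facts on which everything rests, each being exactly the place where $t\geq 56$ is used. Inside a band, $\mathrm{dist}=|b-b'|$ whenever $|b-b'|\leq 55$, because any detour must leave and re-enter the band at a cost $\geq t\geq 56$. Between $B_i$ and $B_{i+25}$ one needs at least $25$ horizontal ($k$-) steps, so $\mathrm{dist}=25+|b-b'|$ when $|b-b'|\leq 5$ and $\mathrm{dist}\geq 31$ otherwise, the only competing route running the other way around the cylinder at cost $\geq t-25\geq 31$. In particular this two-band configuration, together with all its $D(k,t)$-distances up to $30$, is independent of $k$, so it suffices to color the pair of paths $B_i,B_{i+25}$ subject to these distances.

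Next I would put color $1$ on every vertex of even height in both bands: any two such vertices are at distance $\geq 2$ within a band and $\geq 25$ across, so this is legal, and it remains to color the odd-height vertices with $\{16,\dots,21,24,\dots,30\}$. Re-indexing the odd heights of a band by $s$ (height $2s+1$), two occurrences of color $c$ in one band are at distance $2|s-s'|$, so $c$ must be placed with $s$-gap at least $m_c:=\lceil(c+1)/2\rceil$; across the two bands the distance analysis shows the only constraints are for $c\geq 25$, forbidding equal colors whose $s$-positions lie within circular distance $\tau_c=\lfloor(c-25)/2\rfloor\in\{0,1,2\}$. My construction is then: fix one valid periodic coloring $\pi$ of a single band's odd positions (color $c$ on a union of residue classes with gaps $\geq m_c$), use $\pi$ on $B_i$, and use the shift $\pi(\cdot-3)$ on $B_{i+25}$. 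Since $m_c\geq 13$ for every $c\geq 25$, for each such color the two copies meet either at aligned positions (circular distance exactly $3>\tau_c$) or at positions differing by $\geq m_c-3\geq 10>\tau_c$; hence all cross-band constraints are killed, while within-band validity and full coverage of both bands are inherited from $\pi$ (the shift preserves parity, so it keeps the big colors on odd heights).

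This reduces the lemma to the existence of the single-band pattern $\pi$, which is just a packing coloring of $\Z$ by colors with prescribed minimum gaps $m_c$. It exists because the available density is $\sum_c 1/m_c=\tfrac29+\tfrac2{10}+\tfrac2{11}+\tfrac2{13}+\tfrac2{14}+\tfrac2{15}+\tfrac1{16}>1$, leaving room to tile every position, and an explicit periodic pattern (found and verified as for the strips, cf.\ \cite{Ekstein}) realizes it. I expect the main obstacle to be precisely the large colors $25,\dots,30$: their packing distance exceeds the inter-band distance $25$, so the naive choice of identical patterns on the two bands fails. The content of the argument is the offset by $3$ (possible only because the relevant moduli are at least $13$) together with the combinatorial verification that a single-band assignment with these gaps actually closes up into a periodic coloring covering all positions.
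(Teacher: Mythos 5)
Your structural reduction is correct and is essentially the paper's: the distance facts you isolate (within a band, $\mathrm{dist}=|b-b'|$ whenever $|b-b'|\leq 55$; across $B_i$ and $B_{i+25}$, $\mathrm{dist}=25+|b-b'|$ for $|b-b'|\leq 5$ and $\geq 31$ otherwise, both exactly where $t\geq 56$ is used) are right, the gap requirements $m_c=\lceil (c+1)/2\rceil$ and the cross-band thresholds $\tau_c$ check out, and your offset trick is precisely the mechanism in the paper's proof: there, $B_{i+25}$ is colored with the same pattern shifted by $jt$ for any $j$ with $6\leq|j|\leq 25$, and cross-band safety is verified via $\min\{|j|,D_c-|j|\}+25>c$ for $c\geq 25$; your shift by $3$ odd positions is the instance $j=6$. (One side remark: the paper deliberately allows a whole family of offsets $j\in D_p$, which is what lets the theorems later chain many bands together; your fixed offset suffices for the lemma as stated but loses that flexibility.)

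The genuine gap is the step you dispose of in one sentence: the existence of the single-band pattern $\pi$. The inequality $\sum_c 1/m_c=\tfrac29+\tfrac2{10}+\tfrac2{11}+\tfrac2{13}+\tfrac2{14}+\tfrac2{15}+\tfrac1{16}>1$ is only a necessary condition for partitioning $\Z$ into classes with prescribed minimum gaps $m_c$ (each class has density at most $1/m_c$, so density at least $1$ is needed to cover); it is not sufficient, and ``leaving room to tile every position'' is not an argument — partitioning $\Z$ with prescribed gaps is exactly the packing-coloring problem for the infinite path with an arbitrary color multiset, where density slack above $1$ is known not to guarantee a partition, since the classes cannot in general be arithmetic progressions and local conflicts can be unavoidable. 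The paper's entire proof of this lemma consists of exhibiting that object: the explicit period-$144$ sequence $(\ast)$, found by computer, together with the verified within-band same-color distances $D_{25}=26$, $D_{26}=32$, $D_{27}=30$, $D_{28}=32$, $D_{29}=32$, $D_{30}=36$ (i.e., $s$-gaps at least your $m_c$). Appealing to ``an explicit periodic pattern (found and verified)'' that you do not produce leaves the proof incomplete at its core step. The hole is contained and fixable — a finite verification of any concrete periodic assignment of the $13$ colors to the odd heights with gaps $2m_c$, such as the paper's pattern, closes it — but as written the central combinatorial fact is asserted, not proved.
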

 
 \begin{proof}
  We prove this lemma by exhibiting a repeating pattern using colors 1, 16, 17, ..., 21, 24, 25, ..., 30. The pattern was found with  
  help of a computer, it has period 144 and is given here: 
  
  \begin{small}
  \begin{verbatim}
   1,16,1,19,1,24,1,17,1,26,1,25,1,18,1,20,1,21,1,16,1,27,1,19,   
   1,17,1,28,1,29,1,24,1,18,1,30,1,16,1,20,1,21,1,17,1,19,1,25,  
   1,26,1,27,1,18,1,16,1,24,1,28,1,17,1,20,1,19,1,21,1,29,1,30,      
   1,16,1,18,1,25,1,17,1,26,1,24,1,19,1,20,1,21,1,16,1,27,1,18, (*)
   1,17,1,28,1,29,1,25,1,19,1,30,1,16,1,20,1,21,1,17,1,18,1,24,  
   1,26,1,27,1,19,1,16,1,25,1,28,1,17,1,20,1,18,1,21,1,29,1,30.      
  \end{verbatim}
  \end{small}
  
  \bigskip
  
  We color $B_{i}$ cyclically with pattern $(\ast)$ starting at the vertex $i$ and $B_{i + 25}$ cyclically with pattern $(\ast)$
  starting at the vertex $(i + 25)k + jt$ for any $j \in \{ -25, -24, ..., -6, 6, 7, ..., 25\}$. Let $D_{c}$ be a minimal distance 
  between two vertices colored with the same color $c$ in the same band. Then from the coloring of a whole band with pattern $(\ast)$ 
  we have $D_{25} = 26$, $D_{26} = 32$, $D_{27} = 30$, $D_{28} = 32$, $D_{29} = 32$ and $D_{30} = 36$. Let $u \in V(B_{i})$ and 
  $v \in V(B_{i + 25})$ be colored with the same color $c$. For $c \leq 24$, the distance between $u$ and $v$ is greater than 24. For 
  $c \geq 25$, the distance between $u$ and $v$ is at least $\mbox{min}\{|j|, D_{c} - |j|\}$ + 25 which is greater than $c$. 
  Hence we have a packing coloring of $B_{i}$ and $B_{i + 25}$. 
 \end{proof}
   
 In proofs of Theorems \ref{k odd, t even} and \ref{k even, t odd} we use the following statement proved by Goddard et al. in \cite{God}. 
 
 \begin{proposition}\emph{\textbf{\cite{God}}}
 \label{Goddard}
   For every $l \in \N$, the infinite path can be colored with colors $l, l + 1, ..., 3l + 2$.
 \end{proposition}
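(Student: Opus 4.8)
The plan is to build an explicit coloring of the two-way infinite path $P_\infty$ (vertex set $\Z$, consecutive integers adjacent) that uses only colors from the window $\{l, l+1, \ldots, 3l+2\}$. First I would reformulate the goal: a packing coloring assigning color $c$ to a vertex set $X_c$ requires that any two vertices of $X_c$ differ by more than $c$, so $X_c$ must be a set of integers whose consecutive gaps are all at least $c+1$; the cleanest way to guarantee this is to take each $X_c$ to be (a union of) arithmetic progressions whose common difference is at least $c+1$, that is, to exhibit an exact covering of $\Z$ by arithmetic progressions in which the progression carrying color $c$ has modulus $\ge c+1$. Since $2l+3$ colors are available and $\sum_{c=l}^{3l+2}\frac{1}{c+1}=\sum_{m=l+1}^{3l+3}\frac1m\to\ln 3>1$, the density budget needed to tile $\Z$ is met, which is what makes the statement plausible in the first place.

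For the construction itself I would start by assigning the smallest color $l$ to the progression $\{(l+1)n : n\in\Z\}$, its densest legal placement, and then cover the remaining $l$ residue classes modulo $l+1$ with the larger colors. A useful tool here is a scaling (blow-up) operation: given any valid coloring of $P_\infty$ whose colors lie in a window $[m,3m-1]$, replace every vertex by $s$ consecutive vertices and give the $j$-th copy ($0\le j\le s-1$) of a vertex colored $a$ the color $sa+j$; because all distances are multiplied by $s$ this remains a valid packing coloring and its colors lie in $[sm,3sm-1]$, producing colorings for many further parameters from a few base cases. Combining such scalings with a handful of hand-built base patterns, together with local redistributions that split the color-$l$ progression into sub-progressions of larger modulus so its vertices can be absorbed by higher colors, is the intended engine of the argument.

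The hard part, and where I expect the real work to lie, is covering \emph{every} value of $l$ while never exceeding color $3l+2$. The scaling operation is multiplicative, so on its own it reaches only parameters of the form $sm$ and misses, for instance, large primes; meanwhile any rigid greedy or two-level progression scheme wastes a constant fraction of the density and therefore breaks once $l$ grows, since the budget sits only marginally above $1$. The crux is thus an efficient construction that is uniform in $l$: a carefully tuned exact covering system, with moduli chosen in $[l+1,3l+3]$ summing to exactly $1$ and admitting an injective assignment of each color $c$ to a modulus $\ge c+1$ (an Egyptian-fraction-flavoured selection), or equivalently a single periodic pattern whose period and offsets are given as explicit functions of $l$. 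I would expect the bulk of the proof to be the verification that this pattern simultaneously partitions all of $\Z$, meets every gap requirement, and keeps all colors inside the prescribed window.
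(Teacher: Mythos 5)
Your attempt is a proof plan, not a proof, and the gap sits exactly where you yourself locate it: no coloring is ever exhibited. Note first that the paper contains no proof of Proposition~\ref{Goddard} either --- it is imported verbatim from Goddard et al.~\cite{God}, where it is established by a short explicit construction; so the benchmark here is that explicit pattern, and your text supplies no substitute for it. What you do establish is correct but preliminary: the density count $\sum_{c=l}^{3l+2}\frac{1}{c+1}=\sum_{m=l+1}^{3l+3}\frac1m>\ln 3>1$ is a valid feasibility check (necessary in spirit, nowhere near sufficient --- the roughly $10\%$ slack above $1$ is why naive schemes, e.g.\ putting colors $l,\dots,2l$ on even integers and the rest on odd integers, fail: the remaining colors have total density below $\tfrac12$), and your blow-up lemma is sound as stated: replacing each vertex of color $a$ by $s$ copies colored $sa,\dots,sa+s-1$ does turn a coloring with window $[m,3m-1]$ into one with window $[sm,3sm-1]$, since originals of color $a$ are at distance $\geq a+1$, so copies of color $sa+j$ are at distance $\geq sa+s>sa+j$.

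But the engine cannot run. The window $[m,3m-1]$ that your scaling preserves is infeasible for $m\leq 3$ by the very same density count ($\sum_{c=m}^{3m-1}\frac{1}{c+1}<1$ there), so it cannot be seeded from small hand-built cases; and, as you concede, multiplicative reach misses infinitely many $l$ (all large primes, for one), so ``scaling plus finitely many base patterns'' provably does not cover every $l\in\N$. That leaves the ``carefully tuned exact covering system'' as the entire content of the proof, and it is never constructed --- only described by the properties it should have. Worse, under the natural reading of your ``injective assignment of each color $c$ to a modulus $\geq c+1$'' (one arithmetic progression per color, all moduli distinct), such an exact cover of $\Z$ cannot exist: by the Davenport--Mirsky--Newman--Rado theorem, in any finite exact cover of $\Z$ by arithmetic progressions with moduli greater than $1$ the largest modulus occurs at least twice. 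So the scheme must at minimum be reformulated to allow repeated moduli or unions of progressions per color (you permit the latter only parenthetically and make no selection), and the uniform-in-$l$ pattern, its period, offsets, and verification --- i.e., everything the proposition actually asserts --- is absent. The attempt therefore does not establish the statement.
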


  Now we are ready to prove Theorems~\ref{k, t odd}, \ref{k odd, t even} and \ref{k even, t odd}.
 
 \bigskip
 
 \subsection{Proof of Theorem~\ref{k, t odd}} 
 \begin{proof} 
  Let $D_{p} = \{ -25, -24, ..., -6, 6, 7, ..., 25\}$. Let $k_{1} =  \mbox{min}\{k\mbox{ (mod }24), \zlom 24 - k \mbox{ (mod }24)\}$.
   
  Let $r$, $s$ be positive integers such that $t = 24s + r$, where $r$ is odd (since $t$ is odd) and minimal such that 
  $k_{1} \leq r \leq 33$. We prove Theorem \ref{k, t odd} even for $t \geq 24r + r$, which is in the worst case (for $r = 33$) 
  the general bound $t \geq 825$. Hence $s \geq r$ and we have $s$ disjoint strips and $r$ disjoint bands such that 
  $D(k, t) = S_{0}B_{24}S_{25}B_{49}... S_{24(r - 1) + r - 1}B_{24r + r - 1}S_{24r + r}... S_{24(s - 1) + r}$. 
      
  For odd $i = 1, 3, ..., r - k_{1} + 1$, we color the strips $S_{24(i - 1) + i - 1}$ cyclically with the pattern 
  from Fig. \ref{Pattern 24x24} starting at the vertex $24(i - 1)k + (i - 1)k$. If $r > k_{1}$, then, for even 
  $i = 2, 4, ..., r - k_{1}$, we color $S_{24(i - 1) + i - 1}$ cyclically with the pattern from Fig. \ref{Pattern 24x24} starting 
  at the vertex $24(i - 1)k + (i - 1)k - t$. 
   
  Let $k_{1}  \equiv k\mbox{ (mod }24)$. Then, for $i = r + 1, r + 2, ..., s$, we color $S_{24(i - 1) + r}$ cyclically with the pattern 
  from Fig. \ref{Pattern 24x24} starting at the vertex $24(i - 1)k + rk - k_{1}t$. If $k_{1} \neq 1$, then, for 
  $i = r - k_{1} + 2, r - k_{1} + 3, ..., r$, we color the strips $S_{24(i - 1) + i - 1}$ cyclically with the pattern 
  from Fig. \ref{Pattern 24x24} starting at the vertex $24(i - 1)k + (i - 1)k - (i - r + k_{1} - 1)t$.
    
  Let $k_{1} \equiv 24 - k\mbox{ (mod }24)$. Then, for $i = r + 1, r + 2, ..., s$, we color $S_{24(i - 1) + r}$ cyclically with the 
  pattern from Fig. \ref{Pattern 24x24} starting at the vertex $24(i - 1)k + rk + k_{1}t$. If $k_{1} \neq 1$, then, for 
  $i = r - k_{1} + 2, r - k_{1} + 3, ..., r$, we color the strips $S_{24(i - 1) + i - 1}$ cyclically with the pattern 
  from Fig. \ref{Pattern 24x24} starting at the vertex $24(i - 1)k + (i - 1)k + (i - r + k_{1} - 1)t$. Hence we have a packing 
  coloring of all $s$ disjoint strips of $D(k, t)$ using Lemma~\ref{Strip}.
   
  For $i = 1, 2, ..., r$, we color the bands $B_{24i + i - 1}$ cyclically with pattern~$(\ast)$ starting at the vertex 
  $24ik + (i - 1)k + j_{i}t$ such that $j_{i}$ is even (odd) for odd (even) $i$, respectively, for $i > 1$, 
  $j_{i} - j_{i - 1} \in D_{p}$, and for $s = r$, $|j_{r} - j_{1} + k| \mbox{ (mod }144) \in D_{p}$. 
  
  If $s > r$, then $j_{i}$ exist. Now assume that $r = s$. 
   
  If $r = s = 1$, then we set $j_{1} = 0$. Note that from $k_{1} \leq r$ we have only distance graphs $D(1, 25)$ and $D(23, 25)$.     
   
  If $ r = s = 5$, then we have distance graphs $D(k, 125)$ with $k \leq 123$ such that $k_{1} \leq r = 3$. 
  For $k = 1, 3, 5, 19, 43, 45, 47, 49, 51$, we set $j_{1} = 0$, $j_{2} = -13$, $j_{3} = -26$ and $j_{4}, j_{5}$ such that 
  $j_{4} - j_{3} \in D_{p}$ and $j_{5} = j_{3}$. \zlom For $k = 21, 23, 25, 27, 29, 53, 67, 69, 71$, we set $j_{1} = 0$, $j_{2} = -23$, 
  $j_{3} = -46$ and  $j_{4}, j_{5}$ such that $j_{4} - j_{3} \in D_{p}$ and $j_{5} = j_{3}$. For 
  $k = 73, 75, 77, 91, 115, 117, 119, 121\carka 123$ we set $j_{1} = 0$, $j_{2} = -25$, $j_{3} = -50$, $j_{4} = -75$ and $j_{5} = -98$. 
  For $k = 93, 95, 97, 99, 101$ we set $j_{1} = 0$, $j_{2} = -21$, $j_{3} = -42$, $j_{4} = -63$ and $j_{5} = -84$. 

  If $r = s = 3$, then we have distance graphs $D(k, 75)$ with $k \leq 73$ such that $k_{1} \leq r = 3$ and we proceed 
  for feasible $k$ in a similar way as in the case $r = s = 5$.  
   
  If $r = s \geq 7$, then we proceed analogously to previous cases (a combination of $r$ numbers from $D_{p}$ could be from 0 to 144,
  which is the length of the pattern~$(\ast)$).
   
  Hence we have a packing coloring of all $r$ disjoint bands of $D(k, t)$ using the same principle as in the proof of 
  Lemma~\ref{lem-2bands}. 
   
  Note that the bands are colored with colors $1, 16, 17, ..., 21, 24, 25, ..., 30$ and the strips are colored with colors 
  $1, 2, ..., 15, 22, 23$ in such a way that no pair of adjacent vertices is colored with color 1. Then we conclude that we have 
  a packing coloring of $D(k, t)$, hence $\chi_{\rho}(D(k, t)) \leq 30$. 
 \end{proof}
 
 \bigskip 
 
 \bigskip
    
 Some cases, in which we can decrease $t$ for which Theorem \ref{k, t odd} is true, are given in Table~\ref{tb4}.
   
 \begin{table}[ht]
  \centering
   $$ \begin{array}{|c|c|c|c|c|c|c|c|c|c|}
  \hline
   r & 1 & 3 & 5 & 7 & 9 & 11 & 13 & 15 & 17 \\
  \hline
   k_{1} & 1 & 1, 3 & 1, 3, 5 & 1, 3 , 5, 7 & 1, 3, 5, 7, 9 & \forall & \forall & \forall & \forall \\
  \hline
   t \geq & 25 & 75 & 125 & 175 & 225 & 275 & 325 & 375 & 425 \\ \hline
  \end{array}$$
  $$ \begin{array}{|c|c|c|c|c|c|c|c|c|}
  \hline
   r & 19 & 21 & 23 & 25 & 27 & 29 & 31 & 33 \\
  \hline
   k_{1} & \forall & \forall & \forall & 3, 5, 7, 9, 11 & 5, 7, 9, 11 & 7, 9, 11 & 9, 11 & 11 \\
  \hline
   t \geq & 475 & 525 & 575 & 625 & 675 & 725 & 775 & 825  \\ \hline
  \end{array}$$
  \caption{\label{tb4} Table for $t$ depending on odd $k_{1}, r$ with $r \geq k_{1}$.}
  \normalsize
 \end{table}
 
 \bigskip
   
 \subsection{Proof of Theorem~\ref{k odd, t even}} 
 \begin{proof} 
  Let $D_{p} = \{ -25, -24, ..., -6, 6, 7, ..., 25\}$. Let $k_{1} =  \mbox{min}\{k\mbox{ (mod }24), \zlom 24 - k \mbox{ (mod }24)\}$.
   
  Let $r$, $s$ be positive integers such that $t = 24(s + 2) + r$, where $r$ is even (since $t$ is even) and minimal such that 
  $k_{1} < r \leq 34$. We prove Theorem \ref{k odd, t even} even for $t \geq 24(r + 2) + r$, which is in the worst case (for $r = 34$) 
  the general bound $t \geq 898$. Hence $s \geq r$ and we have $s + 2$ disjoint strips and $r$ disjoint bands such that 
  $D(k, t) = S_{0}S_{24}B_{48}S_{49}...S_{24(r - 1) + r - 2}B_{24r + r - 2}S_{24r + r - 1}$
  $S_{24(r + 1) + r - 1}... S_{24(s + 1) + r - 1} B_{24(s + 2) + r - 1}$.
  
  We color the strip $S_{0}$ cyclically with the pattern from Fig. \ref{Pattern 24x24} starting at the vertex 0. 
  For odd $i = 1, 3, ..., r - k_{1}$, we color the strips $S_{24i + i - 1}$ cyclically with the pattern from Fig. \ref{Pattern 24x24}
  starting at the vertex $24ik + (i - 1)k$. If $r > k_{1} + 1$, then, for even $i = 2, 4, ..., r - k_{1} - 1$, we color $S_{24i + i - 1}$
  cyclically with the pattern from Fig. \ref{Pattern 24x24} starting at the vertex $24ik + (i - 1)k - t$. 
   
  Let $k_{1}  \equiv k\mbox{ (mod }24)$. Then, for $i = r + 1, ..., s + 1$, we color $S_{24i + r - 1}$ cyclically with the 
  pattern from Fig. \ref{Pattern 24x24} starting at the vertex $24ik + (r - 1)k - k_{1}t$. If $k_{1} \neq 1$, then, for 
  $i = r - k_{1} + 1, r - k_{1} + 2, ..., r$, we color the strips $S_{24i + i - 1}$ cyclically with the pattern from 
  Fig. \ref{Pattern 24x24} starting at the vertex $24ik + (i - 1)k - (i - r + k_{1})t$.
   
  Let $k_{1} \equiv 24 - k\mbox{ (mod }24)$. Then, for $i = r + 1, r + 2, ..., s + 1$, we color $S_{24i + r - 1}$ cyclically with the 
  pattern from Fig. \ref{Pattern 24x24} starting at the vertex $24ik + (r -1)k + k_{1}t$. If $k_{1} \neq 1$, then, for 
  $i = r - k_{1} + 1, r - k_{1} + 2, ..., r$, we color the strips $S_{24i + i - 1}$ cyclically with the pattern from 
  Fig. \ref{Pattern 24x24} starting at the vertex $24ik + (i - 1)k + (i - r + k_{1})t$. Hence we have a packing coloring of 
  all $s$ disjoint strips of $D(k, t)$ using Lemma~\ref{Strip}.
   
  For $i = 1, 2, ..., r - 1$, we color the bands $B_{24(i + 1) + i - 1}$ cyclically with pattern~$(\ast)$ starting at the vertex 
  $24(i + 1)k + (i - 1)k + j_{i}t$ such that $j_{i}$ is even (odd) for odd (even) $i$, respectively, and for $i > 1$, 
  $j_{i} - j_{i - 1} \in D_{p}$. 
    
  We color $B_{24(s + 2) + r - 1}$ with a sequence of colors 18, 19, ..., 21, 16, 17, 24, 25, ..., 56 starting at any vertex 
  of $B_{24(s + 2) + r - 1}$. By Proposition \ref{Goddard} for $l = 18$, we can color  $B_{24(s + 2) + r - 1}$ with colors 
  18, 19, ..., 56. Since we used colors 22 and 23 for a coloring of strips, we replace color 22 (23) by 16 (17), respectively,
  in the coloring of this band. Note the band $B_{24(s + 2) + r - 1}$ is the only one with colors greater than 48. 
  By Lemma \ref{lem-2bands} and the fact that a distance between any vertex of  $B_{24(s + 2) + r - 1}$ and any vertex of any 
  other band of $D(k, t)$  is at least 49 the defined coloring is a packing coloring of all $r$ disjoint bands of $D(k, t)$.    
   
  Note that the bands are colored with colors $1, 16, 17, ..., 21, 24, 25, ..., 56$ and the strips are colored with colors 
  $1, 2, ..., 15, 22, 23$ in such a way that no pair of adjacent vertices is colored with color 1. Then we conclude that we have a 
  packing coloring of $D(k, t)$, hence $\chi_{\rho}(D(k, t)) \leq 56$. 
 \end{proof}
  
  Some cases, in which we can decrease $t$ for which Theorem \ref{k odd, t even} is true, are summarized in Table~\ref{tb5}.
   
 \begin{table}[ht]
  \centering
  $$ \begin{array}{|c|c|c|c|c|c|c|c|c|c|}
  \hline
   r & 2 & 4 & 6 & 8 & 10 & 12 & 14 & 16 & 18  \\
  \hline
   k_{1} & 1 & 1, 3 & 1, 3, 5 & 1, 3 , 5, 7 & 1, 3, 5, 7, 9 & \forall & \forall & \forall & \forall \\
  \hline
   t \geq & 98 & 148 & 198 & 248 & 298 & 348 & 398 & 448 & 498 \\ \hline
  \end{array}$$
  $$ \begin{array}{|c|c|c|c|c|c|c|c|c|}
  \hline
   r & 20 & 22 & 24 & 26 & 28 & 30 & 32 & 34 \\
  \hline
   k_{1} & \forall & \forall & \forall & 3, 5, 7, 9, 11 & 5, 7, 9, 11 & 7, 9, 11 & 9, 11 & 11 \\
  \hline
   t \geq & 548 & 598 & 648 & 698 & 748 & 798 & 848 & 898 \\ \hline
  \end{array}$$
  \caption{\label{tb5} Table for $t$ depending on odd $k_{1}$ and even $r$ with $r > k_{1}$.}
  \normalsize
 \end{table}
  
  \subsection{Proof of Theorem~\ref{k even, t odd}} 
 \begin{proof} 
  Let $D_{p} = \{ -25, -24, ..., -6, 6, 7, ..., 25\}$. Let $k_{1} =  \mbox{min}\{k\mbox{ (mod }24), \zlom 24 - k \mbox{ (mod }24)\}$. 

  Let $s$ be a positive integer such that $t = 24s + 1$ and $k_{1} = 0$. Hence we have $s$ disjoint strips and 1 band such that 
  $D(k, t) = S_{0}S_{24}... S_{24(s - 1)}B_{24s}$. For $i = 0, ... s - 1$ we color the strips $S_{24i}$ cyclically with the pattern from 
  Fig. \ref{Pattern 24x24} starting at the vertex $24ik$. We color $B_{24s}$ with a sequence of colors 18, 19, ..., 21, 16, 17, 24, 
  25, ..., 56 starting at any vertex of $B_{24s}$. Hence the band $B_{24s}$ is colored with colors $1, 16, 17, ..., 21, 24, 25, ..., 56$
  and the strips are colored with colors $1, 2, ..., 15, 22, 23$ in such a way that no pair of adjacent vertices is colored with color 1.
  Then we conclude that we have a packing coloring of $D(k, t)$, hence $\chi_{\rho}(D(k, t)) \leq 56$. 
    
  Let $r$, $s$ be positive integers such that $t = 24(s + 2) + r$, where $r$ is odd (since $t$ is odd) and minimal such that 
  $k_{1} < r \leq 35$. We exclude the previous case $k_{1} = 0$, $r = 1$ and we prove Theorem \ref{k even, t odd} even for 
  $t \geq 24(r + 2) + r$, which is in the worst case (for $r = 35$) the general bound $t \geq 923$. Hence $s \geq r$ and we have 
  $s + 2$ disjoint strips and $r$ disjoint bands such that $D(k, t) = S_{0}S_{24}B_{48}S_{49}...S_{24(r - 1) + r - 2}
  B_{24r + r - 2}S_{24r + r - 1}S_{24(r + 1) + r - 1}...$ $S_{24(s + 1) + r - 1}B_{24(s + 2) + r - 1}$.  
    
  The rest of the proof of Theorem \ref{k even, t odd} is exactly same as of the proof of Theorem~\ref{k odd, t even}.
  \end{proof}
  
  Some cases, in which we can decrease $t$ for which Theorem \ref{k even, t odd} is true, are given in Table~\ref{tb6}.
   
 \begin{table}[ht]
  \centering
  \footnotesize
  $$ \begin{array}{|c|c|c|c|c|c|c|c|c|c|c|}
  \hline
   r & 1 & 3 & 5 & 7 & 9 & 11 & 13 & 15 & 17 & 19 \\
  \hline
   k_{1} & 0 & 0, 2 & 0, 2, 4 & 0, 2, 4, 6 & 0, 2, 4, 6, 8 & 0, 2, 4, 6, 8, 10 & \forall & \forall & \forall & \forall \\
  \hline
   t \geq & 25 & 123 & 173 & 223 & 273 & 323 & 373 & 423 & 473 & 523 \\ \hline
  \end{array}$$
  $$ \begin{array}{|c|c|c|c|c|c|c|c|c|}
  \hline
   r  & 21 & 23 & 25 & 27 & 29 & 31 & 33 & 35 \\
  \hline
   k_{1} & \forall & \forall & 2, 4, 6, 8, 10, 12 & 4, 6, 8, 10, 12 & 6, 8, 10, 12 & 8, 10, 12 & 10, 12 & 12 \\
  \hline
   t \geq & 573 & 623 & 673 & 723 & 773 & 823 & 873 & 923 \\ \hline
  \end{array}$$
  \caption{\label{tb6} Table for $t$ depending on even $k_{1}$ and odd $r$ with $r > k_{1}$.}
  \normalsize
 \end{table}
   
  \subsection{Proof of Theorem~\ref{general lower bound}}   
  
  \begin{proof}
   It is shown in \cite{EksteinFiala} that a finite square lattice $15 \times 9$ cannot be colored using $11$
   colors. Clearly $D(k, t)$ contains a finite square grid as a subgraph and $t \geq 9$ assures existence of the square
   lattice $15 \times 9$ in a connected $D(k, t)$. Therefore,  $\chi_{\rho}(D(k, t)) \geq 12$ for every $t \geq 9$.
  \end{proof}  

 \section{Remarks and acknowledgemets}

  The access to the METACentrum computing facilities, provided under the programme "Projects of Large Infrastructure for Research, 
  Development and Innovations" LM2010005 funded by the Ministry of Education, Youth and Sports of the Czech Republic, is highly 
  appreciated.
  
  This work was supported by the European Regional Development Fund (ERDF), project “NTIS - New Technologies for Information Society”, 
  European Centre of Excellence, CZ.1.05/1.1.00/02.0090.
  
  First two authors were supported by the Center of Excellence -- Inst. for Theor. Comp. Sci., Prague (project P202/12/G061 of 
  GA~\v{C}R).
  
  The third author was supported by the University of Burgundy (project BQR 036, 2011).


\begin{thebibliography}{10}
  \bibitem{Bon} J.~A.~Bondy, U.~S.~R.~Murty,
   \emph{Graph Theory with Applications}, Macmillan, London and Elsevier (1976).
   
  \bibitem{Bresar} B.~Bre\v{s}ar, S.~Klav\v{z}ar, D.~F.~Rall,
   \emph{On the packing chromatic number of Cartesian products, hexagonal lattice and trees},
    Discrete Appl. Math. {\bf 155} (2007) 2303-2311.   
   
  \bibitem{ChaHuZhu} G.~J.~Chang, L.~Huang, X.~Zhu, 
    \emph{The circular chromatic numbers and the fractional chromatic numbers of distance graphs}, 
    European Journal Combin. {\bf 19} (1998) 423-431.

  \bibitem{EksteinFiala} J.~Ekstein, J.~Fiala, P.~Holub, B.~Lidick\'{y},
    \emph{The packing chromatic number of the square lattice is at least 12}, arXiv: 1003.2291v1, preprint (2010).
  
  \bibitem{Ekstein} J.~Ekstein, P.~Holub, B.~Lidick\'{y},
    \emph{Packing chromatic number of distance graphs}, Discrete Appl. Math. {\bf 160} (2012) 518-524.
    
  \bibitem{Eggl} R.~B.~Eggleton, P.~Erd\"{o}s, D.~K.~Skilton,
     \emph{Colouring the real line}, J.~Combin. Theory Ser. B {\bf 39 (1)} (1985) 86-100.

  \bibitem{FiaGol} J.~Fiala, P.~A.~Golovach,
     \emph{Complexity of the packing chromatic problem for trees}, Discrete Appl. Math. {\bf 158} (2010) 771-778.

  \bibitem{bib-fiala09+} J.~Fiala, S.~Klav\v{z}ar, B.~Lidick\'{y},
     \emph{The packing chromatic number of infinite product graphs}, European J. Combin. {\bf 30 (5)} (2009) 1101-1113.

  \bibitem{God} W.~Goddard, S.~M.~Hedetniemi, S.~T.~Hedetniemi, J.~M.~Harris, D.~F.~Rall,
     \emph{Broadcast chromatic numbers of graphs}, Ars Combin. {\bf 86} (2008) 33-49.

  \bibitem{HolSo} P.~Holub, R.~Soukal, 
     \emph{A note on packing chromatic number of the square lattice}, Electronic Journal of Combinatorics {\bf 17} (2010) Note 17.
  
  \bibitem{KemKo} A. Kemnitz, H. Kolberg, 
      \emph{Coloring of integer distance graphs}, Discrete Math. {\bf 191} (1998) 113-123.  
  
  \bibitem{LiLaSo} W. Lin, P. Lam, Z. Song, 
     \emph{Circular chromatic numbers of some distance graphs}, Discrete Math. {\bf 292} (2005) 119-130.  
  
  \bibitem{RuzTuVoi} I.~Z.~Ruzsa, Z.~Tuza, M.~Voigt,
      \emph{Distance graphs with finite chromatic number}, J. Combin. Theory Ser. B {\bf 85 (1)} (2002) 181-187.

  \bibitem{Togni} O.~Togni,
      \emph{On Packing Colorings of Distance Graphs}, preprint (2010).
      
  \bibitem{Zhu}  X. Zhu, 
      \emph{The circular chromatic number of a class of distance graphs}, Discrete Math. {\bf 265 (1-3)} (2003) 337-350.    

\end{thebibliography}
\end{document}